\newcommand{\comment}[1]{}
\spnewtheorem{Def}{Definition}{\bf}{}
\spnewtheorem{Rem}{Remark}{\bf}{}
\spnewtheorem{Ques}{Question}{\bf}{}
\spnewtheorem{Example}{Example}{\bf}{}
\theoremstyle{definition}
\newtheorem{Rem}{Remark}
\newtheorem{Ques}{Question}
\newtheorem{Example}{Example}
\newtheorem{Thm}{Theorem}
\newtheorem{Prop}{Proposition}
\newtheorem{Coro}{Corollary}
\newcommand{\Einttt}{\int^{\!\!\!\!\!\!\!\!\!e}}
\def\qqed{\ifthenelse{\boolean{format}}{\qed}{}}
\newcommand{\ind}{{\bf 1}}
\newcommand{\proba}{\mathbb P}
\newcommand{\esp}{{\mathbb E}}
\newcommand{\defe}{\mathrel{\mathop:}=}
\newcommand{\inv}{^{-1}}
\newcommand{\argmax}{{\rm{argmax}}}
\newcommand{\calB}{{\cal B}}
\newcommand{\filF}{{\cal F}}
\newcommand{\calN}{{\cal N}}
\def\indn#1{\{#1_n\}_{n\in \mathbb N}}
\newcommand{\eqnh}{\begin{eqnarray*}}
\newcommand{\eqne}{\end{eqnarray*}}
\newcommand{\eqnhn}{\begin{eqnarray}}
\newcommand{\eqnen}{\end{eqnarray}}
\newcommand{\equh}{\begin{equation}}
\newcommand{\eque}{\end{equation}}
\def\summ#1#2#3{\sum_{#1 = #2}^{#3}}
\def\bveein{\bigvee_{i=1}^n}
\newcommand{\eqd}{\stackrel{\rm d}{=}}
\newcommand{\widebar}{\overline}
\def\topp#1{^{(#1)}}
\def\nn#1{{\left\|#1\right\|}}
\def\ccbb#1{\left\{#1\right\}}
\def\bccbb#1{\Big\{#1\Big\}}
\def\bpp#1{\Big(#1\Big)}
\def\bb#1{\left[#1\right]}
\def\bbb#1{\Big[#1\Big]}
\def\d{{\rm d}}
\def\e{{\rm e}}
\def\eqnhspace{& & \ \ \ \ }
\def\mand{\mbox{ and }}
\def\qmand{\quad\mbox{ and }\quad}
\def\qmwith{\quad\mbox{ with }\quad}
\def\mfa{\mbox{ for all }}
\def\mmas{\mbox{ as }}
\def\mif{\mbox{ if }}
\def\qmif{\quad\mbox{ if }\quad}
\def\adaptF#1{\{#1_t,\filF_t:0\leq t<\infty\}}
\def\weakto{\Rightarrow}
\def\vagueto{\stackrel{v}{\rightarrow}}
\def\Vagueto{\stackrel{v}{\longrightarrow}}
\def\limn{\lim_{n\to\infty}}
\def\limm{\lim_{m\to\infty}}
\def\limt{\lim_{t\to\infty}}
\def\wt#1{\widetilde{#1}}
\def\wb#1{\widebar{#1}}
\def\what#1{\widehat{#1}}
\def\ifhead#1#2{\left\{\begin{array}{#1@{\quad\mbox{ if }\quad}#2}}
\def\ifend{\end{array}\right.}
\title{On the Regular Variation of Ratios of Jointly Fr\'echet Random Variables}
\begin{document}\sloppy

\ifthenelse{\boolean{format}}{
\date{February, 2011}
\journalname{Extremes}
\author{Yizao Wang}
\titlerunning{Regular Variation of Ratios}
\authorrunning{Yizao Wang}
\institute{Yizao Wang \at
Department of Statistics, the University of Michigan\\
              439 West Hall, 1085 South University, Ann Arbor, MI, 48109--1107 \\
              \email{yizwang@umich.edu}
}}{\author{Yizao Wang \\Department of Statistics, the University of Michigan}}
\maketitle
\begin{abstract}
We provide a necessary and sufficient condition for the ratio of two jointly $\alpha$-Fr\'echet random variables to be regularly varying. This condition is based on the spectral representation of the joint distribution and is easy to check in practice. Our result motivates the notion of the {\it ratio tail index}, which quantifies dependence features that are not characterized by the {\it tail dependence index}. As an application, we derive the asymptotic behavior of the quotient correlation coefficient proposed in \cite{zhang08quotient} in the dependent case. Our result also serves as an example of a new type of regular variation of products, different from the ones investigated by~\cite{maulik02asymptotic}.

\ifthenelse{\boolean{format}}{\keywords{Regular variation \and multivariate Fr\'echet distribution \and spectral representation \and asymptotic independence \and quotient correlation\and hidden regular variation}
\subclass{62G32\and 60G70}}
{
\vspace{0.1in}\noindent{\bf Keywords:} Regular variation, multivariate Fr\'echet distribution, spectral representation, tail dependence index, logistic model, mixed model

\noindent{\bf AMS 2010 subject classifications:} 62G32, 60G70
}
\end{abstract}
\section{Introduction}
Regular variation is often used to describe the tail behavior of random variables.
A measurable function $U:\mathbb R_+\to \mathbb R_+$ is {\it regularly varying} at infinity with index $\rho$, written $U\in RV_{-\rho}$, if 
\[
\limt \frac{U(tx)}{U(t)} = x^{-\rho}\,,\mfa x>0\,.
\]
When $\rho = 0$ we say $U$ is slowly varying. A random variable $Z$ is regularly varying with index $\rho\geq0$, written $Z\in RV_{-\rho}$, if the tail distribution $\wb F(t) = \proba(Z>t)$ is regularly varying: $\wb F\in RV_{-\rho}$ (see e.g.~\cite{resnick87extreme}). For the sake of simplicity, we only consider non-negative random variables. 

In extreme value theory, the notion of regular variation plays an important role in characterizing the domain of attraction of random variables. Namely, a random variable $Z$ is regularly varying with index $\alpha>0$, if and only if 
\[
\frac1{\kappa_n}\bveein Z_i\weakto \zeta_\alpha
\]
as $n\to\infty$, where $\indn \kappa$ is some normalizing sequence, $Z_1,Z_2,\dots$ are independent and identically distributed (i.i.d.) copies  of $Z$, `$\weakto$' stands for convergence in distribution and $\zeta_\alpha$ is a standard $\alpha$-Fr\'echet random variable with distribution
\[
\Phi_\alpha(t) = \proba(\zeta_\alpha\leq t) = \exp(-t^{-\alpha})\,, t>0\,.
\]
See e.g.~\cite{resnick87extreme}, Proposition 1.11.
In this case, the random variable $Z$ is said to be in the domain of attraction of $\Phi_\alpha$. The notion of regular variation for multivariate random vectors and stochastic processes have also been investigated extensively. See e.g.~\cite{resnick87extreme,resnick07heavy}, \cite{dehaan06extreme}, \cite{balkema07high}, and \cite{hult05extremal}, among others. Other applications of regular variation include, just to mention a few, the domain of attraction of partial sums of i.i.d.~random vectors (\cite{rvaceva62domains}), large deviations of regularly varying random walks (\cite{hult05functional}) and finite-dimensional distributions of the stationary solution of stochastic recurrence equations (\cite{kesten73random} and \cite{goldie91implicit}).

In this paper, we consider the regular variation of ratios of two random variables $X$ and $Y$. From a practical point of view, the ratio $X/Y$ can be seen as a random normalization of $X$ by $Y$. In extreme value theory, when modeling extremal behaviors, certain normalizations (thresholding) of values are often required. Random normalization sometimes has appealing theoretical properties and simplifies the statistical applications (see e.g.~\cite{heffernan07limit}, Section 4). 

By viewing $X/Y$ as the product of $X$ and $1/Y$, the problem is closely related to the regular variation of products of random variables. When the two random variables are independent, this problem has been addressed by \cite{breiman65some}. On the other hand, \cite{maulik02asymptotic} investigated certain dependence cases, which were then applied to the modeling of network traffic.

We address the regular variation of ratios in a specific case. Namely, we suppose that $(X,Y)$ has a {\it bivariate $\alpha$-Fr\'echet distribution} (or, $(X,Y)$ are {\it jointly $\alpha$-Fr\'echet}), i.e., for all $a,b\geq 0$, $\max(aX,bY)$ has $\alpha$-Fr\'echet distribution. To study the bivariate $\alpha$-Fr\'echet distributions, an efficient tool is the {\it spectral representation} introduced by~\cite{dehaan84spectral} and developed by~\cite{stoev06extremal} (a brief review will be given in Section~\ref{sec:preliminaries}). Based on the spectral representation, we provide a necessary and sufficient condition for $X/Y$ to be regularly varying (Theorem~\ref{thm:RV}). If this is the case, then the regular variation index is referred to as the {\it ratio tail index} of $X/Y$. We demonstrate that our condition is easy to check through a few popular models.

Our specific setting provides examples that are not covered by the results in~\cite{breiman65some} and~\cite{maulik02asymptotic}. Furthermore, we show that the ratio tail index does not characterize the dependence between $X$ and $Y$ in the traditional sense. We will compare the ratio tail index and the {\it tail dependence index} (see e.g.~\cite{sibuya60bivariate}, \cite{dehaan77limit} and \cite{ledford96statistics}), which has been widely used to quantify asymptotic (in)dependence of random variables.

As the main application of our result, we derive the asymptotic behavior of the {\it quotient correlation coefficient} (Theorem~\ref{thm:1}) for jointly Fr\'echet distributions. This coefficient was proposed by~\cite{zhang08quotient} and applied to test the independence of two random variables $X$ and $Y$. In this so-called {\it gamma test}, the quotient correlation coefficient is based on the independent samples of ratios $X/Y$ and $Y/X$. The asymptotic behavior of the coefficient has so far been studied only in the case when $X$ and $Y$ are independent Fr\'echet. 

Our result provides new theoretical support for the gamma test. We show that, when $(X,Y)$ is bivariate Fr\'echet, the power of the gamma test is high most of the time. Indeed, the asymptotic behavior of the quotient correlation coefficient is essentially determined by the ratio tail indices of $(X,Y)$ and $(Y,X)$, if they exist. Furthermore, if the ratios have lighter tails than the single variables, then the gamma test rejects the null hypothesis with probability going to one as the sample size increases to infinity (Corollary~\ref{coro:power}). We also show that, when the ratios have tails equivalent to the ones of marginals, then in a `worst scenario', the gamma test performs also reasonably well (Example~\ref{example:power}).

The paper is organized as follows. Section~\ref{sec:preliminaries} provides preliminaries on the regular variation and the spectral representations of bivariate $\alpha$-Fr\'echet random vectors. Section~\ref{sec:ratio} is devoted to the characterization of regular variation of $X/Y$, based on the joint distribution of $(X/Y,Y)$. 
Section~\ref{sec:examples} calculates the ratio tail indices for several well-known examples. 
In Section~\ref{sec:quotientCorrelation}, we review Zhang's gamma test and apply the result in Section~\ref{sec:ratio} to derive the asymptotic behavior of the quotient correlation coefficient in the dependent case. Section~\ref{sec:proofs} provides a proof of the joint distribution of $(X/Y,Y)$, based on a result by~\cite{weintraub91sample}. Finally, in Section~\ref{sec:discussion} we briefly discuss the connection between our results and some related works.

\section{Preliminaries}\label{sec:preliminaries}
In this section, we review the spectral representation of bivariate $\alpha$-Fr\'echet distributions. We also introduce some notations that will be used in the rest of the paper.

We will focus on bivariate 1-Fr\'echet random vector $(X,Y)$. 
Every bivariate 1-Fr\'echet random vector $(X,Y)$ has the following spectral representation:
\equh\label{eq:XY}
(X,Y)\eqd \bpp{\Einttt_Sf(s)M(\d s), \Einttt_Sg(s)M(\d s)}\,.
\eque
Here, `$\ \Einttt\ \ $' stands for the extremal integral, $(S,\calB_S,\mu)$ is a standard Lebesgue space, for example, a Polish space $(S,\rho)$ with a $\sigma$-finite measure $\mu$ on its Borel sets $\calB_S$, $f,g$ are measurable non-negative and integrable functions on $(S,\calB_S,\mu)$, and $M$ is a 1-Fr\'echet random sup-measure on $(S,\calB_S)$ with control measure $\mu$ (see e.g.~\cite{dehaan84spectral} and \cite{stoev06extremal}). The functions $f$ and $g$ are called the {\it spectral functions} of $X$ and $Y$, respectively, and the joint distribution of $(X,Y)$ can be expressed as follows:
\equh\label{eq:fdd0}
\proba(X\leq x,Y\leq y) = \exp\bccbb{-\int_S{\frac{f(s)}x\vee \frac{g(s)}y}\mu(\d s)}\,.
\eque
The random vector in~\eqref{eq:XY} is said to be {\it standard}, if it has standard 1-Fr\'echet marginals or equivalently, $\int_S f\d\mu = \int_S g\d\mu = 1$. It is well known that any bivariate $\alpha$-Fr\'echet random vector can be easily transformed into a standard 1-Fr\'echet random vector (see e.g.~\cite{stoev06extremal}, Proposition 2.9).

For the spectral representation of standard bivariate 1-Fr\'echet distribution~\eqref{eq:XY}, specific choices of $(f,g)$ and $(S,\mu)$ often appear in the literature (see e.g.~\cite{dehaan06extreme}, Theorem 6.1.14). We will provide examples using the following one for our convenience. Set $f(s) = 2s, g(s) = 2(1-s)$ for $s\in S = [0,1]$, and $\mu = H$, a probability distribution on $[0,1]$ with mean $1/2$. In this case,
\equh\label{eq:01}
\proba(X\leq x,Y\leq y) = \exp\bpp{-\int_0^{1}\frac{2s}x\vee\frac{2(1-s)}yH(\d s)}\,.
\eque
Many of our examples are constructed by choosing a specific $H$, and their joint cumulative distribution functions often do not have simple forms. 

The bivariate $\alpha$-Fr\'echet distributions arise as limits of i.i.d.~bivariate regularly varying random vectors, and it is often convenient to use the notion of {\it vague convergence}, denoted by `$\vagueto$' (see e.g.~\cite{kallenberg86random} and~\cite{resnick87extreme}). In particular, a random vector $(V,W)$ is said to be regularly varying with index $\alpha$, if 
\equh\label{eq:VC}
t\proba\bbb{\bpp{\frac V{\kappa_t},\frac W{\kappa_t}}\in\cdot}\Vagueto \nu(\cdot)
\eque
in $M_+(\mathbb E)$. 
Here, $\kappa_t\in RV_{1/\alpha}$, $\mathbb E = [0,\infty]^2\setminus\{(0,0)\}$, $M_+(\mathbb E)$ is the space of all nonnegative Radon measures on $\mathbb E$, the limit $\nu\in\mathbb E$ is non-zero and $\nu(c\,\cdot) = c^{-\alpha}\nu(\cdot)$, for all $c>0$. The measure $\nu$ is called the {\it exponent measure}, and it is said to have non-degenerate marginals, if $\nu((x,\infty]\times[0,\infty])$ and $\nu([0,\infty]\times(y,\infty])$ are non-degenerate in $x$ and $y$, respectively.
By Proposition 5.11 in \cite{resnick87extreme}, when $\alpha = 1$ and $\nu$ has non-degenerate normalized marginals, one can write
\[
\nu\bccbb{([0,x]\times[0,y])^c} = \int_0^1\frac{2w}x\vee\frac{2(1-w)}y H(\d w)
\]
such that $\int_0^1H(\d w) = \int_0 2wH(\d w) = 1$. In this case, letting $(V_i,W_i), i=1,\dots,n$ be i.i.d.~copies of $(V,W)$, the vague convergence~\eqref{eq:VC} is equivalent to
\equh\label{eq:VW}
\bpp{\frac1{\kappa_n}\bveein V_i, \frac1{\kappa_n}\bveein W_i} \weakto (X,Y)
\eque
with $(X,Y)$ defined as in~\eqref{eq:01}.

For any random vector $(V,W)$, we say that $V$ and $W$ are {\it asymptotically independent}, if~\eqref{eq:VW} holds with independent $X$ and $Y$. This corresponds to the case when $f$ and $g$ have disjoint supports in~\eqref{eq:XY}, or $H = (\delta_{\{0\}}+\delta_{\{1\}})/2$ in~\eqref{eq:01} (after normalization), or  the exponent measure $\nu$ in~\eqref{eq:VC} concentrates on the two axes $\{(x,0), x>0\}\cup\{(0,y),y>0\}$ with non-degenerate marginals. 

Throughout this paper, for all measurable functions $f$ and measurable sets $S_0\subset S$, we write $\nn f_{S_0} = \int_{S_0}|f(s)|\mu(\d s)$, and define
\equh\label{eq:Dt}
D_t\defe\{s\in S:f(s)/g(s) \leq t\}\qmand E_t\defe  S\setminus D_t\,.
\eque
Here and in the sequel, we will use the convention $1/0 = \infty$.
For any standard bivariate 1-Fr\'echet random vector $(X,Y)$ as in~\eqref{eq:XY}, two important identities are
\[
\limt\nn f_{E_t} = \int_Sf\ind_{\{g=0\}}\d\mu\qmand\limt \nn g_{D_t} = \nn g_S = 1\,,
\]
which follow from the dominated convergence theorem. Furthermore, the joint distribution~\eqref{eq:fdd0} can be expressed as
\equh\label{eq:fdd}
\proba(X\leq x,Y\leq y) = \exp\bbb{-\bpp{x\inv{\nn f_{E_{x/y}}} + y\inv\nn g_{D_{x/y}}}}\,.
\eque
Finally, for any random variable $Z$ and $u\geq 0$, we consider a thresholded version of $Z$ denoted by
\equh\label{eq:u}
 Z(u) = u+(Z-u)_+ = \max(Z,u)
\eque
as in \cite{zhang08quotient}. We consider such thresholded random variables in order to analyze the quotient correlation coefficient (\eqref{eq:qun} below), although it will turn out that the threshold value $u$ does not play an essential role in the asymptotic behavior (see Remark~\ref{rem:CLL} below). 
For a random vector $(X,Y)$ in $\mathbb R^2$, let $\{(X_i,Y_i)\}_{i=1,\dots,n}$ denote $n$ i.i.d.~copies of $(X,Y)$.

\section{Ratios of Fr\'echet Random Variables}\label{sec:ratio}
In this section, we provide an explicit formula for the distribution of $X(u)/Y(u)$, $u\geq 0$. Here $X(u) = X$ if $u = 0$. This leads to a necessary and sufficient condition for $X(u)/Y(u)$ to be regularly varying. In the sequel, all the bivariate 1-Fr\'echet random vectors $(X,Y)$ with representation~\eqref{eq:XY} are assumed to be standard. We write $a_t\sim b_t$ if $\limt a_t/b_t = 1$. 

Our first result is an explicit formula for the joint distribution of $(X/Y,Y)$.
\begin{Prop}\label{prop:1}
Consider $(X,Y)$ as in~\eqref{eq:XY}. Then, for all $t\geq0, u\geq 0$, 
\equh\label{eq:prop1}
\proba(X/Y\leq t,Y>u)
= \bpp{1+\frac {\nn f_{E_t}}{t\nn g_{D_t}}}\inv\bbb{1-\exp\bpp{-\frac{\nn g_{D_t}+t\inv{\nn f_{E_t}}}u}}\,.
\eque
In particular, when $\mu(E_t) = 0$, we have $\proba(X/Y>t) = 0$.
\end{Prop}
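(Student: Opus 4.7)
The plan is a direct calculation from the joint CDF~\eqref{eq:fdd}, avoiding any point-process machinery. First, rewrite $G(x,y):=\proba(X\le x,Y\le y)=\exp\{-\phi(x,y)\}$ where $\phi(x,y)=\int_S \max(f(s)/x,g(s)/y)\,\mu(\d s)$; this coincides with $x\inv\nn f_{E_{x/y}}+y\inv\nn g_{D_{x/y}}$, and on the ray $x=ty$ collapses to $\phi(ty,y)=C_t/y$ with $C_t=t\inv\nn f_{E_t}+\nn g_{D_t}$.

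Next, using the identity $\{X\le tY\}\cap\{Y\in\d y\}=\{X\le ty\}\cap\{Y\in\d y\}$, I would write
\[
\proba(X/Y\le t,\,Y>u)=\int_u^\infty\partial_y G(x,y)\big|_{x=ty}\,\d y,
\]
where $\partial_y$ denotes the partial derivative in the second argument only, with $x$ held fixed and $x=ty$ substituted afterwards. Differentiation under the $\mu$-integral (justified by dominated convergence) yields $\partial_y\phi(x,y)=-y^{-2}\nn g_{D_{x/y}}$, because $\partial_y\max(f/x,g/y)$ equals $-g/y^2$ on $D_{x/y}$ and $0$ on $E_{x/y}$, the ridge $\{f/g=x/y\}$ being $\mu$-null for a.e.\ $y$. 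Substituting $x=ty$ gives $\partial_y G(x,y)|_{x=ty}=y^{-2}\nn g_{D_t}\,e^{-C_t/y}$.

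The change of variable $v=C_t/y$ reduces the remaining integral to $(\nn g_{D_t}/C_t)(1-e^{-C_t/u})$; rewriting the prefactor as $(1+\nn f_{E_t}/(t\nn g_{D_t}))\inv$ yields~\eqref{eq:prop1}. For the last sentence of the proposition: when $\mu(E_t)=0$ one has $\nn f_{E_t}=0$ and $\nn g_{D_t}=\nn g_S=1$, so~\eqref{eq:prop1} collapses to $1-e^{-1/u}=\proba(Y>u)$, whence $\proba(X/Y>t)=\lim_{u\downarrow 0}[\proba(Y>u)-\proba(X/Y\le t,Y>u)]=0$.

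The only technical subtlety is the differentiation under the integral across the non-smooth surface $\{f/x=g/y\}$; a standard dominated-convergence argument with bound $y^{-2}g$ handles this. A conceptually cleaner alternative, closer to the Weintraub-based proof the author cites, conditions on the Poisson atom $(s^*,j^*)$ attaining $Y$: Palm theory gives $s^*\sim g\,\d\mu$ independent of $Y$, so $\{s^*\in D_t\}$ contributes the prefactor $\nn g_{D_t}$, while the independent remainder---a PPP on $\{g(s)j<Y\}$---contributes the exponential via an explicit Poisson-void probability. Either route produces the same formula.
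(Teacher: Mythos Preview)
Your proof is correct and arrives at the same integral as the paper, but it gets there by a more elementary route. The paper's proof (Section~\ref{sec:proofs}) invokes Lemma~3.4 of \cite{weintraub91sample} to obtain the conditional law
\[
\proba(X\le x\mid Y=y)=\nn g_{D_{x/y}}\exp\bigl(-x\inv\nn f_{E_{x/y}}+y\inv\nn g_{E_{x/y}}\bigr),
\]
and then integrates against the marginal density $f_Y(y)=y^{-2}e^{-1/y}$. Your computation of $\partial_y G(x,y)$ recovers exactly the product $\proba(X\le x\mid Y=y)\,f_Y(y)$ by direct differentiation of the explicit joint CDF, so the two integrands coincide term by term and the subsequent change of variables is identical. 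The gain of your approach is self-containment: Weintraub's lemma is established via the underlying Poisson process, whereas your argument needs only the closed form~\eqref{eq:fdd} and a one-line dominated-convergence bound $|\partial_y\max(f/x,g/y)|\le g/y^2\in L^1(\mu)$. The price is the small care you note about the ridge $\{f/g=x/y\}$, which is indeed $\mu$-null for all but countably many $y$, and the fact that the identity $\proba(X\le tY,\,Y>u)=\int_u^\infty(\partial_2 G)(ty,y)\,\d y$ still requires a disintegration argument (equivalently, absolute continuity of the marginal of $Y$) to be fully justified---once that is granted, the two proofs are the same integral. The Palm/Poisson alternative you sketch at the end is precisely the mechanism behind Weintraub's lemma, so the paper's citation and your second route are the same idea.
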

The proof borrows a result from~\cite{weintraub91sample}, and is deferred to Section~\ref{sec:proofs}. 
\begin{Rem}
Proposition~\ref{prop:1} can be seen as a special case of the conditional limit law established in~\cite{heffernan07limit} (Propositions 4 and 5 therein), where $(X,Y)$ are assumed to satisfy certain regular-variation type condition. Their results describe the asymptotic limit of $t\proba[(X/Y,Y/t)\in\cdot]$ as $t\to\infty$. Here, thanks to the spectral representation, we provide an explicit formula for the joint distribution of $(X/Y, Y)$. In particular, our result readily implies that 
\eqnh
\lim_{u\to\infty}\proba(X/Y\leq t\mid Y>u) & = & 
\lim_{u\to\infty}\frac{\proba(X/Y\leq t, Y>u)}{\proba(Y>u)}
\\
& = & \lim_{u\to\infty}\bpp{1+\frac {\nn f_{E_t}}{t\nn g_{D_t}}}\inv\frac{1-\exp[-(\nn g_{D_t}+t\inv{\nn f_{E_t}})u\inv]}{1-\exp(-u\inv)}\\
& = & \nn g_{D_t} = \int_{S}g\ind_{\{f\leq tg\}}\d\mu\,,
\eqne
which recovers Equation (32) in~\cite{heffernan07limit}. For a more general and geometric treatment of the conditional limit law, see~\cite{balkema07high}.
\end{Rem}
Now, the distribution of $X(u)/Y(u)$ follows as a corollary.
\begin{Coro}
Consider $(X,Y)$ as in~\eqref{eq:XY}. Then, for all $t\geq1, u\geq 0$, 
\equh\label{eq:Xu/Yu}
\proba\bpp{\frac{X(u)}{Y(u)}>t} = \bpp{1+\frac{\nn g_{D_t}}{\nn f_{E_t}}t}\inv\bbb{1-\exp\bpp{-\frac{\nn g_{D_t}+t\inv{\nn f_{E_t}}}u}}\,.
\eque
In particular, for all $u\geq 0$, as $t\to\infty$,
\equh\label{eq:Xu/Yu2}
\proba\bpp{\frac{X(u)}{Y(u)}>t} \sim \frac{\nn f_{E_t}}t\bb{1-\exp(-u\inv)}\,.
\eque
\end{Coro}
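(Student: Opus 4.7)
The plan is to deduce the corollary directly from Proposition~\ref{prop:1} and the joint CDF~\eqref{eq:fdd} via a case split on whether $X$ and $Y$ each exceed $u$. The key observation is that, for $t\geq 1$, if $X\leq u$ then $X(u)/Y(u)=u/Y(u)\leq 1\leq t$, so the event $\{X(u)/Y(u)>t\}$ forces $X>u$ and hence $X(u)=X$. Splitting further according to whether $Y>u$ (so that $Y(u)=Y$) or $Y\leq u$ (so that $Y(u)=u$), I would first establish the disjoint decomposition
\[
\{X(u)/Y(u)>t\} \;=\; \{X>tY,\,Y>u\}\,\cup\,\{X>tu,\,Y\leq u\}.
\]

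For the first piece I would use Proposition~\ref{prop:1} together with the standard-Fr\'echet marginal $\proba(Y>u)=1-e^{-1/u}$ to write $\proba(X>tY,\,Y>u) = (1-e^{-1/u}) - \proba(X/Y\leq t,\,Y>u)$, the latter term being given by~\eqref{eq:prop1}. For the second piece, \eqref{eq:fdd} with $x=tu$, $y=u$ (so $x/y=t$) gives $\proba(X\leq tu,\,Y\leq u) = \exp[-u\inv(\nn g_{D_t}+t\inv\nn f_{E_t})]$, whence $\proba(X>tu,\,Y\leq u) = e^{-1/u}-e^{-A/u}$ with $A\defe \nn g_{D_t}+t\inv\nn f_{E_t}$. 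Writing also $B\defe \nn f_{E_t}/(t\nn g_{D_t})$, the $\pm e^{-1/u}$ contributions cancel when the two probabilities are summed, and the remainder factors as
\[
\proba\bpp{\frac{X(u)}{Y(u)}>t} \;=\; (1-e^{-A/u})\bb{1-(1+B)\inv} \;=\; (1-e^{-A/u})\,\frac{B}{1+B}.
\]
Since $B/(1+B)=(1+t\nn g_{D_t}/\nn f_{E_t})\inv$, this is precisely~\eqref{eq:Xu/Yu}.

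For the asymptotic~\eqref{eq:Xu/Yu2} I would invoke the two limits noted just before~\eqref{eq:fdd}: $\nn g_{D_t}\to \nn g_S = 1$ and $\nn f_{E_t}\to \int_S f\ind_{\{g=0\}}\d\mu$, which is finite. Consequently $t\inv\nn f_{E_t}\to 0$ and $A\to 1$, so $1-e^{-A/u}\to 1-e^{-1/u}$, while the prefactor satisfies
\[
\bpp{1+t\nn g_{D_t}/\nn f_{E_t}}\inv \;=\; \frac{\nn f_{E_t}}{t\nn g_{D_t}+\nn f_{E_t}} \;\sim\; \frac{\nn f_{E_t}}{t}
\]
as $t\to\infty$, which combined with the previous display yields~\eqref{eq:Xu/Yu2} (assuming $\nn f_{E_t}>0$ for all large $t$; otherwise both sides vanish identically, consistent with the last sentence of Proposition~\ref{prop:1}).

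There is no serious obstacle here: the proof reduces to a careful case split followed by routine algebra, with Proposition~\ref{prop:1}, the CDF formula~\eqref{eq:fdd}, and the two precomputed limits doing all the real work. The only place requiring mild care is the first step, namely verifying that for $t\geq 1$ the event $\{X(u)/Y(u)>t\}$ decomposes exactly as stated, ruling out any contribution from $\{X\leq u\}$ and from the sub-case $\{X<u,\,Y>u\}$ within $\{Y>u\}$.
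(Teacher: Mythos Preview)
Your proposal is correct and follows essentially the same approach as the paper: both arguments rest on the disjoint decomposition $\{X(u)/Y(u)>t\}=\{X>tu,\,Y\leq u\}\cup\{X>tY,\,Y>u\}$ for $t\geq 1$, then evaluate the two pieces via~\eqref{eq:fdd} and~\eqref{eq:prop1} respectively, combine to get~\eqref{eq:Xu/Yu}, and deduce~\eqref{eq:Xu/Yu2} from $\nn g_{D_t}\to 1$ and $t\inv\nn f_{E_t}\to 0$. The paper's write-up is slightly more compact (it passes directly to $1-\proba(X\leq tu,\,Y\leq u)-\proba(X\leq tY,\,Y>u)$ before plugging in), but the content is the same.
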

\begin{proof}
Observe that for $t\geq1$, 
\eqnh
& & 
\proba\bpp{\frac{X(u)}{Y(u)}>t} = \proba(X>t(u+(Y-u)^+), X\geq u) \\
\eqnhspace \quad = \proba(X>tu, Y\leq u) + \proba(X>tY, Y>u)\\
\eqnhspace \quad = \proba(Y\leq u) - \proba(X\leq tu, Y\leq u) +\proba(Y>u) - \proba(X\leq tY, Y>u)\\
\eqnhspace \quad = 1 - \proba(X\leq tu, Y\leq u) - \proba(X\leq tY, Y>u)\,.
\eqne
Plugging in~\eqref{eq:fdd} and~\eqref{eq:prop1} yields~\eqref{eq:Xu/Yu}, whence~\eqref{eq:Xu/Yu2} follows immediately, noting $\limt \nn g_{D_t} + t\inv\nn f_{E_t} = \nn g_S = 1$.\qqed
\end{proof}
Inspired by~\eqref{eq:Xu/Yu2}, define 
\equh\label{eq:gamma}
\gamma(t) \defe \frac{\nn f_{E_t}}t = \frac1t{\int_Sf\ind_{\{f>tg\}}\d\mu}\,.
\eque
Clearly, $\limt \gamma(t) = 0$. Then, provided $\gamma(t)>0$, or equivalently, $\mu(E_t)>0$ for all $t\in(0,\infty)$,
~\eqref{eq:Xu/Yu2} implies that for all $x>0$, 
\[
\frac{\proba( X(u)/ Y(u)>tx)}{\proba( X(u)/ Y(u)>x)}
\sim \frac{\gamma(tx)/(1+\gamma(tx))}{\gamma(x)/(1+\gamma(x))} \sim \frac{\gamma(tx)}{\gamma(x)}\,, \mmas t\to\infty\,.
\]
Therefore, we have thus proved the following result.
\begin{Thm}\label{thm:RV}
Consider $(X,Y)$ given in~\eqref{eq:XY} and suppose $\mu(E_t)>0$ for all $t\in(0,\infty)$. Then, 
\equh\label{eq:RV}
X(u)/Y(u)\in RV_{-\alpha} \mbox{ for some (all) } u\geq 0\mbox{, if and only if } \gamma\in RV_{-\alpha}\,.
\eque
\end{Thm}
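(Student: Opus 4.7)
The plan is to bootstrap directly off of the tail asymptotic~\eqref{eq:Xu/Yu2}, which already identifies $\gamma$ as the essential shape of the tail of $X(u)/Y(u)$ for every fixed $u\geq 0$. Concretely, I would establish the stronger factored form
\[
\proba\!\left(\frac{X(u)}{Y(u)}>t\right)\sim \gamma(t)\,c(u),\qquad t\to\infty,
\]
where $c(u):=1-\exp(-u\inv)$ with the convention $c(0)=1$, and then use the elementary fact that asymptotic equivalence up to a positive multiplicative constant preserves regular variation.

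First I would justify that~\eqref{eq:Xu/Yu2} does cover the boundary value $u=0$ (i.e.\ $X(u)/Y(u)=X/Y$). For that, I would plug $u=0$ into Proposition~\ref{prop:1}: since $Y>0$ almost surely as a 1-Fr\'echet variable, the event $\{Y>0\}$ has full probability, and the right-hand side of~\eqref{eq:prop1} simplifies (formally, $1-\exp(-1/0)=1$) to $(1+\nn f_{E_t}/(t\nn g_{D_t}))\inv$. Taking complements and using $\nn g_{D_t}\to 1$ and $t\inv \nn f_{E_t}\to 0$ gives $\proba(X/Y>t)\sim \gamma(t)$, which is precisely~\eqref{eq:Xu/Yu2} with $c(0)=1$. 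For $u>0$, \eqref{eq:Xu/Yu2} is already stated in the corollary.

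Second, the hypothesis $\mu(E_t)>0$ for every $t\in(0,\infty)$ ensures $\gamma(t)>0$ throughout, so ratios $\gamma(tx)/\gamma(x)$ are well defined for every $x,t>0$. From the displayed factorization one reads
\[
\lim_{x\to\infty}\frac{\proba(X(u)/Y(u)>tx)}{\proba(X(u)/Y(u)>x)}
= \lim_{x\to\infty}\frac{\gamma(tx)\,c(u)}{\gamma(x)\,c(u)}
= \lim_{x\to\infty}\frac{\gamma(tx)}{\gamma(x)},
\]
for any $u\geq 0$. Hence the left-hand limit exists and equals $x^{-\alpha}$ for every $x>0$ if and only if $\gamma\in RV_{-\alpha}$, which proves both directions of the equivalence simultaneously; moreover, since the right-hand side is independent of $u$, the property $X(u)/Y(u)\in RV_{-\alpha}$ is either valid for all $u\geq 0$ or for none, yielding the "some (all)" clause.

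I do not anticipate a real obstacle: all the analytic work (the explicit tail formula and the reduction to $\gamma$) is already done in Proposition~\ref{prop:1} and the preceding corollary. The only small points to be careful about are (i) handling the $u=0$ endpoint cleanly using the convention $1/0=\infty$ already set in the paper, and (ii) invoking the nondegeneracy $\mu(E_t)>0$ to guarantee that $\gamma(t)>0$ so that the asymptotic equivalences can be divided without issue.
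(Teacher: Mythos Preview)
Your approach is correct and essentially identical to the paper's: both reduce the tail of $X(u)/Y(u)$ to $\gamma(t)\,c(u)$ via~\eqref{eq:Xu/Yu2} and then form the ratio so that the constant $c(u)$ cancels, leaving $\gamma(tx)/\gamma(x)$. One minor slip to fix: in your last display the limit should equal $t^{-\alpha}$ for every fixed $t>0$ (not ``$x^{-\alpha}$ for every $x>0$''), since $x$ is the variable you send to infinity.
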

In other words, for bivariate 1-Fr\'echet random vector $(X,Y)$, to study the regular variation of $X/Y$, it is equivalent to study the regular variation of $\gamma(t)$ in~\eqref{eq:gamma}, based on the spectral functions. We will see in the next section that for many well-known examples, the regular variation of $\gamma(t)$ follows from simple calculations. 
\begin{Rem} 
When $\mu(E_t) = 0$, or equivalently $\gamma(t) = 0$ for $t$ large enough, we have $\proba(X(u)/Y(u)>t)\leq \proba(X/Y>t) = 0$ by Proposition~\ref{prop:1}. 
This situation is relatively simple, and we do not study this case in this paper.
\end{Rem}
\begin{Rem}
A more general setting should be to consider $(X,1/Y)$ in some domain of attractions, such that $X/Y\in RV_{-\alpha}$. \cite{maulik02asymptotic} investigated certain general cases under this framework. Our specific case, however, is not covered by their results. See more discussion in Section~\ref{sec:discussion}. 
\end{Rem}
From now on, we say $(X,Y)$ has {\it ratio tail index} $\alpha$, if~\eqref{eq:RV} holds. We immediately have the following consequences. 
In the sequel, for any non-decreasing function $U$ on $\mathbb R$, let $U^\leftarrow(y)\defe \inf\{s:U(s)\geq y\}$ denote the left-continuous inverse of $U$.
\begin{Coro}\label{coro:2}
Consider $(X,Y)$ given in~\eqref{eq:XY} and suppose $\mu(E_t)>0$ for all $t\in (0,\infty)$. Suppose that $(X,Y)$ has ratio tail index $\alpha$, then the following statements hold:
\begin{itemize}
\item[(i)] $\alpha = 1$ if and only if $\nn f_{E_t}$ is slowly varying,
\item[(ii)] for all $u\geq 0$ and for all sequences $\kappa_n\sim (1/\gamma)^\leftarrow(Cn)$, 
\equh\label{eq:convergence}
\frac1{\kappa_n}\bveein\frac{X_i(u)}{Y_i(u)}\weakto \zeta_\alpha\,,
\eque
where $C = 1-\exp(-u\inv)$ and $\zeta_\alpha$ is a standard $\alpha$-Fr\'echet random variables, and 
\item[(iii)] for any sequence $\{u_n\}_{n\in\mathbb N}$ such that $u_n = o(n)$, the convergence in~\eqref{eq:convergence} holds with $u$ replaced by $u_n$ and $\kappa_n\sim(1/\gamma)^\leftarrow(n/u_n)$. 
\end{itemize}
\end{Coro}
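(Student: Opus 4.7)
The plan is to dispatch the three parts in sequence, leveraging Theorem~\ref{thm:RV}, the exact formula~\eqref{eq:Xu/Yu}, and the classical equivalence between regularly varying tails and convergence of the normalized maximum to $\Phi_\alpha$. For (i), I would simply use the identity $\nn f_{E_t}=t\gamma(t)$: if $\gamma\in RV_{-\alpha}$ then $\nn f_{E_t}\in RV_{1-\alpha}$, which is slowly varying precisely when $\alpha=1$.

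For (ii), I would combine~\eqref{eq:Xu/Yu2} with the hypothesis $\gamma\in RV_{-\alpha}$ to write $\proba(X(u)/Y(u)>t)\sim C\gamma(t)$ as $t\to\infty$, with $C\defe 1-\exp(-u\inv)$. Thus $X(u)/Y(u)\in RV_{-\alpha}$ and~\eqref{eq:convergence} is the standard max-domain-of-attraction statement (e.g.,~\cite{resnick87extreme}, Proposition 1.11); choosing $\kappa_n$ so that $n\proba(X(u)/Y(u)>\kappa_n)\to 1$ translates, through the tail equivalence, to $nC\gamma(\kappa_n)\to 1$, i.e., $\kappa_n\sim(1/\gamma)^\leftarrow(Cn)$.

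For (iii), the copies now form a triangular array $Z_i^{(n)}\defe X_i(u_n)/Y_i(u_n)$. I would verify directly that $n\proba(Z_1^{(n)}>\kappa_n x)\to x^{-\alpha}$ for each $x>0$, whence $\proba(\max_i Z_i^{(n)}\leq \kappa_n x)=(1-\proba(Z_1^{(n)}>\kappa_n x))^n\to\exp(-x^{-\alpha})$. Feeding $t=\kappa_n x$ and $u=u_n$ into~\eqref{eq:Xu/Yu}, and using $\nn g_{D_t}\to 1$ together with $\gamma(t)\to 0$ as $t\to\infty$, the right-hand side is asymptotic to $\gamma(\kappa_n x)\bb{1-\exp(-u_n\inv)}$; regular variation then gives $\gamma(\kappa_n x)\sim x^{-\alpha}\gamma(\kappa_n)$, so everything reduces to $n\bb{1-\exp(-u_n\inv)}\gamma(\kappa_n)\to 1$. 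Under $u_n\to\infty$, so that $1-\exp(-u_n\inv)\sim u_n\inv$, the prescribed choice $\kappa_n\sim(1/\gamma)^\leftarrow(n/u_n)$ enforces exactly this, and the hypothesis $u_n=o(n)$ guarantees $\kappa_n\to\infty$ so that the tail asymptotics apply.

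The main obstacle will be in (iii): justifying the combined asymptotics cleanly when both $t=\kappa_n x$ and $u_n$ vary with $n$. The two asymptotic equivalences --- the tail of $Z_1^{(n)}$ in $t$ from~\eqref{eq:Xu/Yu2} and the Taylor expansion of $1-\exp(-u_n\inv)$ --- cannot simply be chained, and one must verify that their product still yields the clean $x^{-\alpha}$ limit after multiplication by $n$. Parts (i) and (ii), by contrast, are essentially immediate from Theorem~\ref{thm:RV} and standard max-stable theory.
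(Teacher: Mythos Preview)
Your proposal is correct and follows essentially the same approach as the paper, which dispatches (i) as trivial, derives (ii) from Proposition~1.11 in~\cite{resnick87extreme}, and handles (iii) by the analogous triangular-array argument. Your discussion in fact supplies more detail than the paper's one-line proof; the ``obstacle'' you flag in (iii) is genuine but benign, since both asymptotic equivalences are multiplicative and the exact formula~\eqref{eq:Xu/Yu} lets you control the two factors separately (note also that the intended regime in (iii) is $u_n\to\infty$, as confirmed by Remark~\ref{rem:CLL}).
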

\begin{proof}
Part (i) is trivial. Part (ii) follows from Proposition 1.11 in \cite{resnick87extreme} and part (iii) can be proved by a similar argument.\qqed
\end{proof}
An important consequence of Theorem~\ref{thm:RV} is that, the tail of the ratio is always lighter than or equivalent to the tails of $X$ and $Y$. Indeed,~\eqref{eq:gamma} implies $\gamma(t) = O(t\inv)$ as $t\to\infty$, thus $\gamma\in RV_{-\alpha}$ implies $\alpha\geq 1$. 
The case $\alpha = 1$ includes the case when $X$ and $Y$ are independent Fr\'echet random variables. This follows from~\cite{breiman65some}: if $X\in RV_{-1}$, $Z$ has finite $1+\epsilon$ moment for some $\epsilon>0$, and $X$ and $Z$ are independent, then $XZ\in RV_{-1}$ (here, $Z=1/Y$ has standard exponential distribution).
 
However, for dependent $X$ and $Y$, the ratio tail index $\alpha$ can still equal 1. By Part (i) of Corollary~\ref{coro:2}, a simple case of $\alpha=1$ is when $\limt \nn f_{E_t} = \int_Sf\ind_{\{g =0\}}\d\mu>0$. This is the case, in view of the spectral representation~\eqref{eq:01}, when there is a point mass at $\{1\}$, or equivalently when one can write $X = X_1\vee X_2$, such that $X_1$ is 1-Fr\'echet or equal to 0 almost surely, $X_2$ is 1-Fr\'echet, and $X_2$ is independent of $X_1$ and $Y$. If in addition $X_1 = 0$, then $X$ and $Y$ are independent.
We can also have examples such that $\alpha=1$ and $\int_Sf\ind_{\{g=0\}}\d\mu = 0$, by constructing $\nn f_{E_t}$ to be slowly varying, in a similar way as in Example~\ref{example:notRV} below.

Next, we compare the ratio tail index and the {\it tail dependence index} defined by
\equh\label{eq:TDI}
\lambda = \limt\proba(X>t\mid Y>t)\,,
\eque
provided the limit exists. The random variables $X$ and $Y$ are {asymptotically independent} if $\lambda = 0$, and {asymptotically dependent} if $\lambda\in(0,1]$. The tail dependence index has been widely studied and applied. See e.g.~\cite{sibuya60bivariate}, \cite{dehaan77limit}, \cite{ledford96statistics}, among others. 

It is easy to see that when $(X,Y)$ is bivariate 1-Fr\'echet, $X$ and $Y$ are asymptotically independent if and only if they are independent, as $(X,Y)$ is in its own domain of attraction. 
As a consequence, the tail dependence index (when $\lambda>0$) measures the dependence strength of bivariate 1-Fr\'echet random vectors. 
To understand the difference between the tail dependence index and the ratio tail index, observe that in our case, 
\[
\limt\proba(X>t\,|\, Y>t) = \nn{\min(f,g)}_S = \int_0^12\min(s,1-s)H(\d s)\,.
\]
The tail dependence index is determined by the spectral measure $H$ on $(0,1)$ (excluding $\{0,1\}$), while the ratio tail index is determined by the behavior of $H$ on a small neighborhood of $\{0,1\}$. 
\begin{Example}\label{example:patho}
Consider $(X,Y)$ given by
\equh\label{eq:rho}
(X,Y) \eqd ((1-\rho)Z_2\vee \rho Z_3,\rho Z_1\vee (1-\rho)Z_2)\,,
\eque
where $\rho\in[0,1]$ and $Z_1, Z_2$ and $Z_3$ are independent standard 1-Fr\'echet random variables. This corresponds, in~\eqref{eq:01}, to choose 
\[
H = \frac\rho2(\delta_{\{0\}}+\delta_{\{1\}})+(1-\rho)\delta_{\{1/2\}}\,,
\]
where $\delta_{\{a\}}$ is the unit point mass at $a$. It is easy to see $\gamma(t) = \rho/t$ for $t>1$. Therefore, $X/Y\in RV_{-1}$, that is, the ratio tail index always equals 1 for $\rho>0$. 

Observe that in this model, however, the dependence strength between $X$ and $Y$ varies for different choices of $\rho$. Observe that $\proba(X = Y) = \proba(Z_2\geq \rho/(1-\rho) (Z_1\vee Z_3)) = (1-\rho)/(1+\rho)$, which increases from 0 to 1 as $\rho$ decreases from 1 to 0. By direct calculation, the tail dependence index equals $\lambda = 1-\rho$, which reflects the dependence strength. The ratio tail index does not.
\end{Example}

We conclude this section with a remark on role of the threshold value $u$. The ratio tail indices of several concrete bivariate distributions are calculated in the following section.
\begin{Rem}\label{rem:CLL}
Theorem~\ref{thm:RV} and Corollary~\ref{coro:2} indicate that the threshold value $u$ does not play an essential role in the asymptotic behavior of $X(u)/Y(u)$. Indeed, from~\eqref{eq:convergence}, the limit, if it exists, is independent of $u>0$, up to multiplicative constants. This suggests that letting $u\to\infty$ is more interesting as $n\to\infty$, and part (iii) of Corollary~\ref{coro:2} shows that this would change the convergence rate, but the limit is still the same.

\end{Rem}
\section{Examples}\label{sec:examples}
In this section, we provide several examples on the ratio tail index. The first example is a concrete one where $X/Y$ is not regularly varying.
\begin{Example}\label{example:notRV}
We construct an example based on the spectral representation~\eqref{eq:01}. We will find an $H$ such that $\gamma(t)\sim \exp(-t)$ as $t\to\infty$. Here $\exp(-t)$ is not regularly varying and therefore, by Theorem~\ref{thm:RV}, $X/Y$ is not regularly varying. 

Suppose $H$ has Radon--Nikodym derivative $h(s)$ w.r.t.~the Lebesgue measure on $[0,1]$. First, we need to find $h(s)$ such that 
\[
\int_{\frac t{1+t}}^12sh(s)\d s = Ct\exp(-t)
\]
for $t$ large enough and some constant $C$ to be chosen later. Solving the above equation, we obtain
\[
h_1(s) = \frac C{2s(1-s)^3}\exp\bpp{-\frac s{1-s}}\,.
\]
One can choose a function $h_0:[0,1/2]\to\mathbb R$ and tune $C$ properly, such that 
\equh\label{eq:H}
H(\d s) = h(s)\d s \qmwith h(s) =\left\{
\begin{array}{l@{\mif}l}
h_0(s) & s\in[0,1/2]\\
h_1(s) & s\in[1/2,1]
\end{array}
\right.
\eque
yields a probability measure on $[0,1]$ with mean $1/2$. (For a simple choice, one can consider $H$ to be symmetric by setting $h_0(s) = h_1(1-s)$ for $s\in[0,1/2]$.)
Thus, for $(X,Y)$ corresponding to the spectral representation~\eqref{eq:01} with $H$ given in~\eqref{eq:H}, 
the extremes of $X$ are compressed by the extremes of $Y$, resulting in an exponentially light tail of $X/Y$, which is not regularly varying.
\end{Example}

The next example is the spectrally discrete bivariate 1-Fr\'echet random vectors.
\begin{Example}\label{example:SD}
A bivariate 1-Fr\'echet random vector $(X,Y)$ is {\it spectrally discrete}, if it has the following representation
\equh\label{eq:XYSD}
(X,Y)\eqd \bpp{\bigvee_{i=1}^m a_iZ_i, \bigvee_{i=1}^m b_iZ_i}\,,
\eque
where  $Z_1,\dots,Z_m$ are i.i.d.~standard 1-Fr\'echet random variables, $a_i\geq 0, b_i\geq 0, i = 1,\dots, m$. 
This model corresponds, in~\eqref{eq:XY}, to choose $S = \{1,\dots,m\}$, $\mu$ as the counting measure on $S$, $f(i) = a_i$ and $g(i) = b_i$.

Consider~\eqref{eq:XYSD} with $a_i/b_i$ strictly increasing.
If $a_m/b_m<\infty$, then clearly we have $\limm\bveein{X_i}/{Y_i} = {a_m}/{b_m}$ almost surely. Otherwise, if $a_m/b_m = \infty$ ($b_m = 0$) then $\gamma(t) = a_m/t$ for $t>a_{m-1}/b_{m-1}$. Therefore, Theorem~\ref{thm:RV} implies $X/Y\in RV_{-1}$.  That is, the ratio of spectrally discrete vectors is either bounded, or with tail index 1.
\end{Example}
Below, we calculate the ratio tail indices of two popular bivariate Fr\'echet distributions, the {\it logistic model} and the {\it mixed model}. These models have been studied carefully and applied widely to many real data analyses. See \cite{beirlant04statistics} and the references therein for detailed results on these models and their modifications.
\begin{Example}[The logistic model (\cite{gumbel60bivariate})]\label{example:logistic}
Consider the bivariate 1-Fr\'echet distribution
\equh\label{eq:logistic}
\proba(X\leq x,Y\leq y) = \exp\bbb{-\bpp{\frac1{x^\alpha}+\frac1{y^\alpha}}^{1/\alpha}}\,, x>0, y>0
\eque
for $\alpha\in[1,\infty]$. 
Here, the parameter $\alpha$ characterizes the dependence between $X$ and $Y$. The random variables $X$ and $Y$ are independent, if $\alpha$ = 1, and they are fully dependent, i.e., $\proba(X = Y) = 1$, if $\alpha = \infty$.

Example 5.13 in \cite{resnick87extreme} showed that this distribution has a representation~\eqref{eq:XY} with, when $\alpha>1$,
\[
f(s) = (\alpha-1)s^{\alpha-1}(1-s^\alpha)^{-1/\alpha}, g(s) = (\alpha-1)s^{\alpha-2}\,,
\]
and $(S,\mu) = ([0,1],Leb)$.
By straightforward calculation, we have
\[
E_t = \bccbb{s\in[0,1]:\frac{f(s)}{g(s)} > t} = \bccbb{s\in[0,1]:\frac{s^\alpha}{1-s^\alpha} > t^\alpha} = \Big(\frac t{(1+t^\alpha)^{1/\alpha}},1\Big]\,.
\]
Therefore, writing $\wt s = s^\alpha$,
\begin{multline*}
\gamma(t)=\frac{\nn f_{E_t}}
t = \frac1t\int_{\frac t{(1+t^\alpha)^{1/\alpha}}}^1f(s)\d s\\
= \frac1t\int_{\frac{t^\alpha}{1+t^\alpha}}^1\frac{\alpha-1}\alpha(1-\wt s)^{-1/\alpha}\d\wt s = \frac1t\bpp{\frac1{1+t^\alpha}}^{1-1/\alpha}\sim t^{-\alpha}\,.
\end{multline*}
Theorem~\ref{thm:RV} implies that the logistic model~\eqref{eq:logistic} has ratio tail index $\alpha$. This provides a probabilistic interpretation of $\alpha$. Another way to interpret the parameter $\alpha$ is provided by \cite{ledford98concomitant}. Therein, it is shown that $1/\alpha$ equals the limit probability that the component maxima do not occur at the same observation, i.e.,
\[
\limn \proba(\argmax_{i = 1,\dots,n}X_i = \argmax_{i = 1,\dots,n}Y_i) = 1-1/\alpha\,. 
\]
\end{Example}
\begin{Rem}[Estimation of the logistic model]
For the logistic model in Example~\ref{example:logistic}, curiously one may want to apply the Hill estimator (\cite{hill75simple}) on i.i.d.~copies $R_i\defe X_i/Y_i$ to estimate $\alpha$. Let $R_{(1,n)}\leq\cdots\leq R_{(n,n)}$ be the order statistics of $\{R_i\}_{i = 1,\dots, n}$. Then, the Hill estimator is defined by
\[
\what\gamma_H \equiv \what\gamma_H(k,n) \defe\frac1k\summ i1k \log R_{(i,n)} - \log R_{(k+1,n)}\,,
\]
depending on a threshold integer value $k$. This estimator is consistent in the sense that $\what\gamma_H\to\alpha\inv$ in probability, as long as $k/n\to 0$ and $k\to\infty$ as $n\to\infty$ (see e.g.~\cite{dehaan06extreme}~Theorem 3.2.2). Furthermore, the asymptotic normality of $\what\gamma_H$ is guaranteed by the {\it second-order condition} (see e.g.~\cite{dehaan06extreme}, Definition 2.3.1, Theorem~3.2.5). Indeed, since the explicit formula of $\proba(X/Y>t)$ is available, after some calculation we can show
\[
\sqrt k(\what\gamma_H-\alpha\inv)\weakto\calN(0,\alpha^{-2})\,,
\]
for $k = \left\lfloor n^{\beta/2}\right\rfloor$, $\beta\in(0,2/3)$. When $\beta = 2/3$, the Hill estimator has the optimal rate ($n^{1/3}$), but the limit will be a non-centered normal distribution. 

However, applying the Hill estimator here has little practical interest, 
since in this parametric model the maximum likelihood estimator works well with better rate ($n^{1/2}$): when $\alpha>1$, the estimation problem is regular and when $\alpha = 1$, the non-regular behavior of the maximum likelihood estimator has been addressed by \cite{tawn88bivariate}. \end{Rem}
\begin{Example}[The mixed model (\cite{gumbel62multivariate})]
Consider the bivariate 1-Fr\'echet distribution
\equh\label{eq:sibuya}
\proba(X\leq x,Y\leq y) = \exp\bbb{-\bpp{\frac1x+\frac1y-\frac k{x+y}}}, x>0, y>0
\eque
for $k\in[0,1]$. Here, $k = 0$ corresponds to the independent case, though $k = 1$ does not correspond to the full dependence case. By a similar calculation as in Example 5.13 in~\cite{resnick87extreme}, this distribution has a spectral representation~\eqref{eq:01} with
\[
H(\cdot) = k {\rm{Leb}}(\cdot) + \frac{1-k}2[\delta_{\{0\}}(\cdot) + \delta_{\{1\}}(\cdot)]\,,
\]
where $\delta_{\{0\}}$ and $\delta_{\{1\}}$ are unit point masses at $\{0\}$ and $\{1\}$, respectively. Straightforward calculation shows
\[
\gamma(t)= \frac{\nn f_{E_t}}t = \bccbb{{1-k}+k\bbb{1-\bpp{\frac t{1+t}}^2}}\frac1t\,.
\]
Therefore,
\[
\gamma(t)\sim\left\{
\begin{array}{l@{\qmif}l}
(1-k)t\inv & 0<k<1\\
2t^{-2} & k=1\,.
\end{array}
\right.
\]
When $k=1$, the ratio tail index equals $2$, and
\[
\frac1{n^{1/2}}\bveein\frac{X_i}{Y_i}\weakto2^{-1/2}\zeta_{2}\,,
\] 
where $\zeta_2$ is standard 2-Fr\'echet. When $k\in(0,1)$, the ratio tail index equals $1$, and
\[
\frac1{n}\bveein\frac{X_i}{Y_i}\weakto\frac1{1-k}\zeta_{1}\,,
\] 
where $\zeta_1$ is standard 1-Fr\'echet. The asymptotic behavior of the maxima of the ratios changes dramatically at $k=1$.
\end{Example}

\section{The Quotient Correlation and Independence Test}\label{sec:quotientCorrelation}
In this section, we apply our results on the ratio tail index to study the asymptotic behavior of a test statistic recently proposed by \cite{zhang08quotient}. Therein, Zhang proposed the gamma test for testing the independence between two random variables, based on their ratios. 
A similar test was also proposed aiming at testing the asymptotic independence (tail independence). The asymptotic behavior of the test statistics proposed have so far been studied only for independent Fr\'echet random variables. Here, we establish asymptotic results for jointly Fr\'echet random variables with arbitrary dependence structure. We show that, in most of the dependence cases, the power of the hypothesis test goes to one as the sample size increases to infinity.

\cite{zhang08quotient} essentially focused on the test statistics of the form
\equh\label{eq:qun}
q_{u,n} \defe\frac{\max_{1\leq i\leq n}( X_i(u)/ Y_i(u)) + \max_{1\leq i\leq n}( Y_i(u)/ X_i(u)) - 2}{\max_{1\leq i\leq n}( X_i(u)/ Y_i(u))\times \max_{1\leq i\leq n}( Y_i(u)/ X_i(u)) - 1}\,,
\eque
where $(X_i(u),Y_i(u))$ are i.i.d.~copies of $(X(u),Y(u))$ (recall~\eqref{eq:u}). When $u=0$, $q_n \defe q_{0,n}$ is called the {\it quotient correlation coefficient} and when $u>0$, $q_{u,n}$ is called the {\it tail quotient correlation coefficient}.

When $X$ and $Y$ are independent, \cite{zhang08quotient} showed that 
\[
\bpp{\frac1n\bveein\frac{ X_i(u)}{ Y_i(u)},\frac1n\bveein\frac{ Y_i(u)}{ X_i(u)}}\weakto\bpp{(1-\e^{-1/u})\zeta_1\topp1,(1-\e^{-1/u})\zeta_1\topp2}\,,
\]
and 
\equh\label{eq:nqun}
nq_{u,n}\weakto(1-\e^{-1/u})\inv\bpp{\frac1{\zeta_1\topp1}+\frac1{\zeta_1\topp2}}\eqd \Gamma(2,(1-\e^{-1/u})\inv)\,,
\eque
where $\zeta_1\topp1$ and $\zeta_1\topp2$ are independent standard 1-Fr\'echet random variables. Here $\Gamma(k,\theta)$ stands for the gamma distribution, which equals the distribution of the sum of $k$ independent exponential random variables with mean $\theta$. Recall that the inverse of a standard 1-Fr\'echet random variable has standard exponential distribution.

Based on~\eqref{eq:nqun} with $u = 0$, a hypothesis testing for independence was designed in~\cite{zhang08quotient} with the following null and alternative hypothesis:
\equh\label{eq:test}
H_0: \mbox{$X$ and $Y$ are independent}\mand H_1:\mbox{ $X$ and $Y$ are dependent.}
\eque
The test statistic $nq_{n}$ then has gamma limit distribution as~\eqref{eq:nqun} under the null hypothesis, the test~\eqref{eq:test} is thus referred to as the {\it gamma test}.
When $u > 0$, a similar hypothesis test was designed for testing asymptotic independence. 

In this section, we address the asymptotic behavior of $q_{u,n}$ for dependent bivariate 1-Fr\'echet random vectors. 
Recall again that for jointly Fr\'echet random variables, the independence and the asymptotic independence are equivalent. Therefore, we will focus on the independence test~\eqref{eq:test} and in particular $q_n = q_{0,n}$.
This essentially requires to investigate the limit of the joint distributions
\[
\bpp{\frac1{\kappa^+_n}\bveein \frac{X_i(u)}{Y_i(u)}, \frac1{\kappa^-_n}\bveein \frac{Y_i(u)}{X_i(u)}}
\]
with $u = 0$ as $n\to\infty$ for some suitable sequence $\indn{\kappa^\pm}$. However, all the asymptotic limits in this section would be the same for all $u>0$, up to multiplicative constants depending on $u$. We choose $u=0$ also for the sake of simplicity.

We first introduce some notations, since we need to deal with two groups of symbols, corresponding to $X/Y$ and $Y/X$ respectively. By default, the symbols with a sign `$+$' (`$-$' resp.) correspond to the ratio $X/Y$ ($Y/X$ resp.). In particular, for $(X,Y)$ as in~\eqref{eq:XY}, write $\gamma_+(t) = \gamma(t)$ as in~\eqref{eq:gamma} and 
\[
\gamma_-(t) = \frac{\int_Sg\ind_{\{g>tf\}}\d\mu}t\,.
\]
We have shown in Section~\ref{sec:examples} how to calculate the regular variation of $\gamma_+(t)$ for several models. Here $\gamma_-(t)$ can be treated similarly. In particular, if $f(s) = g(1-s)$ for $s\in[0,1]$ and $\mu$ is symmetric on $[0,1]$, or equivalently $H$ in~\eqref{eq:01} is symmetric on $[0,1]$, then $\gamma_+(t) = \gamma_-(t)$.
The following theorem and its corollary generalize Theorems 3.1 and 5.3 in \cite{zhang08quotient}.
\begin{Thm}\label{thm:1}
Consider $(X,Y)$ given by~\eqref{eq:XY}. Suppose $\mu(E_t)>0$ and $\mu(D_t)>0$ for all $t\in(0,\infty)$. Then, $\gamma_\pm\in RV_{-\alpha_\pm}$ for some $\alpha_+>0,\alpha_->0$, if and only if 
\equh\label{eq:asymptoticIndependence}
\bpp{\frac1{\kappa_n^+}\bveein\frac{ X_i}{ Y_i},\frac1{\kappa_n^-}\bveein\frac{ Y_i}{ X_i}}\weakto(\zeta_{\alpha_+},\zeta_{\alpha_-})\,,
\eque
where $\kappa_n^\pm\sim (1/\gamma_\pm)^\leftarrow(n)$ and $\zeta_{\alpha_\pm}$ are independent standard $\alpha_\pm$-Fr\'echet random variables, respectively. When $\alpha_+ = \alpha_- = \alpha$, $\zeta_{\alpha_+}$ and $\zeta_{\alpha_-}$ in~\eqref{eq:asymptoticIndependence} are interpreted as two independent standard $\alpha$-Fr\'echet random variables.
\end{Thm}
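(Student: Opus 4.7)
The plan is to prove both implications by analyzing the joint distribution of $(\max_i X_i/Y_i, \max_i Y_i/X_i)$ and exploiting a simple but essential reciprocity observation: for any fixed $s,t>0$ and $n$ large enough, the events $\{X/Y > s\kappa_n^+\}$ and $\{Y/X > t\kappa_n^-\}$ are disjoint, because their simultaneous occurrence would require $s\kappa_n^+ \cdot t\kappa_n^- < 1$, which fails since $\kappa_n^\pm \to \infty$.

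For the ``only if'' direction, assume $\gamma_\pm \in RV_{-\alpha_\pm}$. First I would observe, by taking $u\downarrow 0$ in Proposition~\ref{prop:1} (using that $Y>0$ a.s.), that
\[
\proba(X/Y > t) = \frac{\gamma_+(t)}{\nn g_{D_t}+\gamma_+(t)} \sim \gamma_+(t)\quad\mbox{as } t\to\infty,
\]
since $\nn g_{D_t}\to 1$; an analogous statement holds for $Y/X$. Because $\kappa_n^+ \sim (1/\gamma_+)^\leftarrow(n)$, regular variation of $\gamma_+$ gives $np_n \to s^{-\alpha_+}$ where $p_n = \proba(X/Y > s\kappa_n^+)$, and similarly $nq_n \to t^{-\alpha_-}$ for $q_n = \proba(Y/X > t\kappa_n^-)$. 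The reciprocity observation yields $\proba(X/Y > s\kappa_n^+,\ Y/X > t\kappa_n^-)=0$ for all sufficiently large $n$, so by independence of the i.i.d.\ sample,
\[
\proba\bpp{\bveein \frac{X_i}{Y_i}\leq s\kappa_n^+,\ \bveein \frac{Y_i}{X_i}\leq t\kappa_n^-} = (1-p_n-q_n)^n \to \exp(-s^{-\alpha_+})\exp(-t^{-\alpha_-}),
\]
which is the joint distribution of two independent Fr\'echet random variables, proving~\eqref{eq:asymptoticIndependence}.

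For the ``if'' direction, joint convergence implies marginal convergence via the continuous mapping theorem, yielding $\kappa_n^{+,-1}\bveein X_i/Y_i \weakto \zeta_{\alpha_+}$ and its counterpart. By the converse of Proposition~1.11 in~\cite{resnick87extreme}, this forces $X/Y\in RV_{-\alpha_+}$ and $Y/X\in RV_{-\alpha_-}$, and then Theorem~\ref{thm:RV} (applied in both orderings of the pair, which is legitimate since $\mu(E_t)>0$ and $\mu(D_t)>0$ for all $t$) delivers $\gamma_\pm\in RV_{-\alpha_\pm}$. The case $\alpha_+=\alpha_-=\alpha$ requires no separate treatment, since the limit computation above already produces a product of Fr\'echet distribution functions, which is independence regardless of whether the indices coincide.

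The main obstacle, conceptually, is establishing the \emph{independence} in the joint limit rather than the marginal convergences, which follow from Corollary~\ref{coro:2}. The reciprocity observation dispatches this cleanly and is the only nontrivial step; all remaining work is routine manipulation of Fr\'echet tail probabilities and an application of Proposition~\ref{prop:1} to obtain the asymptotic equivalence $\proba(X/Y>t)\sim\gamma_+(t)$.
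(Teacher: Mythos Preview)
Your proof is correct and follows essentially the same route as the paper's. The central device---the disjointness of $\{X/Y>t_+\}$ and $\{Y/X>t_-\}$ once $t_+t_->1$, which reduces the joint survival probability to a sum of marginals---is exactly what the paper uses (stated there as $\proba(R^+\leq t_+,R^-\leq t_-)=1-\proba(R^+>t_+)-\proba(R^->t_-)$ for $t_+,t_->1$), and the ``if'' direction is likewise dispatched via Theorem~\ref{thm:RV} in both arguments.
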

\begin{proof}
The `if' part follows from Theorem~\ref{thm:RV}. We show the `only if' part.
Write $R^+ =  X/Y$ and $R^- =  Y/X$.
Observe that, for $t_+, t_->1$,
\[
\proba(R^+\leq t_+, R^-\leq t_-) = 1 - \proba(R^+> t_+) - \proba(R^-> t_-)\,.
\]
Therefore, for all $t_+, t_->0$ and $n$ such that $\kappa_n^+t_+>1$ and $\kappa_n^-t_->1$,
\equh\label{eq:log}
 \log\proba\bpp{\frac1{\kappa_n^+}\bveein\frac{ X_i}{ Y_i}\leq t_+, \frac1{\kappa_n^-}\bveein\frac{ Y_i}{ X_i}\leq t_-}
\sim -n\bb{\proba(R^+>t_+\kappa_n^+) + \proba(R^->t_-\kappa_n^-)}
\,.
\eque
By definition of $\kappa_n^\pm$, we have
\[
n\proba(R^+>t_+\kappa_n^+)\sim\frac{\proba(R^+>t_+\kappa_n^+)}{\proba(R^+>\kappa_n^+)}\sim t_+^{-\alpha_+}
\]
and similarly $n\proba(R^->t_-\kappa_n^-)\sim t_-^{-\alpha_-}$. Therefore,
\begin{multline*}
\frac{n[\proba(R^+>t_+\kappa_n^+)+\proba(R^->t_-\kappa_n^-)]}{t_+^{-\alpha_+}+t_-^{-\alpha_-}}\\
= \frac{n\proba(R^+>t_+\kappa_n^+)}{t_+^{-\alpha_+}}\frac{t_+^{-\alpha_+}}{t_+^{-\alpha_+}+t_-^{-\alpha_-}} + \frac{n\proba(R^->t_+\kappa_n^-)}{t_-^{-\alpha_-}}\frac{t_-^{-\alpha_-}}{t_+^{-\alpha_+}+t_-^{-\alpha_-}}\to 1
\end{multline*}
as $n\to\infty$. Combined with~\eqref{eq:log} we have thus proved~\eqref{eq:asymptoticIndependence}.\qqed
\end{proof}
As a corollary, we establish the asymptotic behavior of $q_{n}$. The test statistic $q_n$ has different asymptotic limits, depending on different regular-variation type behaviors of $\gamma_\pm$. We write $a_n = o(b_n)$, if $\limn a_n/b_n = 0$. 
\begin{Coro}\label{coro:power}
Suppose that the assumptions in Theorem~\ref{thm:1} hold. Consider $\kappa_n^\pm\sim(1/\gamma_\pm)^\leftarrow(n)$. Then, 
\eqnhn\label{eq:limit}
\kappa_n^+q_n\weakto\zeta_{\alpha_+}\inv & \mif & \kappa_n^+=o(\kappa_n^-)\,,\nonumber\\
\kappa_n^-q_n\weakto\zeta_{\alpha_-}\inv & \mif & \kappa_n^-=o(\kappa_n^+)\,,\nonumber\\
\kappa_n^+q_n\weakto\frac1{\zeta_{\alpha_+}}+\frac C{\zeta_{\alpha_-}} & \mif & \kappa_n^+\sim C\kappa_n^-\mbox{ for some } C>0\,.\label{eq:kappanqn}
\eqnen
with $\zeta_{\alpha_\pm}$ as in Theorem~\ref{thm:1}. Note that $\alpha_+>\alpha_-$ implies $\kappa_n^+ = o(\kappa_n^-)$, and $\kappa_n^+\sim C\kappa_n^-$ implies $\alpha_+=\alpha_-$.
\end{Coro}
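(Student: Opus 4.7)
The plan is to reduce the asymptotics of $q_n$ to the joint behaviour of the componentwise maxima
\[
M_n^+ \defe \bigvee_{i=1}^n \frac{X_i}{Y_i} \qmand M_n^- \defe \bigvee_{i=1}^n \frac{Y_i}{X_i}\,,
\]
for which Theorem~\ref{thm:1} provides the key input. A direct algebraic rearrangement gives
\[
q_n = \frac{M_n^+ + M_n^- - 2}{M_n^+ M_n^- - 1}\,.
\]
Since $\kappa_n^\pm\to\infty$ we have $M_n^\pm\to\infty$ in probability, so dividing the numerator and the denominator by $M_n^+M_n^-$ and multiplying through by $\kappa_n^+$ yields
\[
\kappa_n^+ q_n = \frac{\kappa_n^+/M_n^+ + \kappa_n^+/M_n^- - 2\kappa_n^+/(M_n^+M_n^-)}{1 - 1/(M_n^+M_n^-)}\,,
\]
in which the denominator tends to $1$ in probability.

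By Theorem~\ref{thm:1} together with the continuous mapping theorem, we have the joint convergence
\[
\bpp{\frac{\kappa_n^+}{M_n^+},\frac{\kappa_n^-}{M_n^-}}\weakto\bpp{\frac1{\zeta_{\alpha_+}},\frac1{\zeta_{\alpha_-}}}\,,
\]
with independent Fr\'echet marginals on the right. Writing $\kappa_n^+/M_n^- = (\kappa_n^+/\kappa_n^-)\cdot(\kappa_n^-/M_n^-)$, I would then dispatch the three cases by Slutsky's theorem. If $\kappa_n^+ = o(\kappa_n^-)$ then $\kappa_n^+/M_n^-\Pto 0$, so only the term $\kappa_n^+/M_n^+\weakto 1/\zeta_{\alpha_+}$ survives in the numerator; the case $\kappa_n^- = o(\kappa_n^+)$ is symmetric after rescaling by $\kappa_n^-$ instead. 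If $\kappa_n^+\sim C\kappa_n^-$ then $\kappa_n^+/M_n^-\weakto C/\zeta_{\alpha_-}$, and the numerator converges jointly to $1/\zeta_{\alpha_+} + C/\zeta_{\alpha_-}$; the remainder $2\kappa_n^+/(M_n^+M_n^-)$ is $o_P(1)$ in all three regimes.

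The two final assertions reduce to Karamata bookkeeping: $1/\gamma_\pm\in RV_{\alpha_\pm}$ implies $\kappa_n^\pm\sim(1/\gamma_\pm)^\leftarrow(n)\in RV_{1/\alpha_\pm}$ as a function of $n$, so $\alpha_+>\alpha_-$ forces $1/\alpha_+<1/\alpha_-$ and hence $\kappa_n^+ = o(\kappa_n^-)$; and asymptotic proportionality $\kappa_n^+\sim C\kappa_n^-$ forces the two sequences to have the same regular-variation index in $n$, hence $\alpha_+=\alpha_-$. I do not anticipate a serious technical obstacle: the heart of the matter is Theorem~\ref{thm:1}, and the remainder is a careful Slutsky-type decomposition to identify the correct rate in each regime. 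The only subtlety is that one must really invoke the \emph{joint} (not merely marginal) convergence of $(M_n^+/\kappa_n^+, M_n^-/\kappa_n^-)$, so that the two summands in the numerator can be combined into a single limit random variable.
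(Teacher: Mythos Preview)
Your argument is correct and follows essentially the same route as the paper's proof: rewrite $\kappa_n^+ q_n$ as a continuous function of the rescaled maxima, invoke the joint convergence from Theorem~\ref{thm:1}, and finish with the continuous mapping theorem (together with Slutsky for the negligible terms). The only cosmetic difference is that the paper normalizes by setting $\widehat R_n^\pm = R_n^\pm/\kappa_n^+$ and works with $\kappa_n^+ q_n = (\widehat R_n^+ + \widehat R_n^- - 2/\kappa_n^+)/(\widehat R_n^+\widehat R_n^- - 1/(\kappa_n^+)^2)$, whereas you divide numerator and denominator by $M_n^+ M_n^-$ first; the two manipulations are algebraically equivalent, and your treatment is in fact more explicit in handling all three regimes and the closing remarks about the relation between $\alpha_\pm$ and $\kappa_n^\pm$.
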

\begin{proof}
We only prove~\eqref{eq:kappanqn}. The proofs for the other two cases are similar. For the sake of simplicity, write 
\equh\label{eq:Rn}
R_n^+ = \bveein \frac{X_i}{Y_i},\quad R_n^- = \bveein \frac{Y_i}{X_i}
\eque
and $\what R_n^\pm = R_n^\pm/\kappa_n^+$. Then, by~\eqref{eq:qun}, we have
\equh\label{eq:kappan+qn}
\kappa_n^+q_n = \frac{\what R_n^+ + \what R_n^- - 2/\kappa_n^+}{\what R_n^+ \what R_n^- - 1/(\kappa_n^+)^2}\,.
\eque
Now, Theorem~\ref{thm:1} implies $(\what R_n^+,\what R_n^-)\weakto (\zeta_{\alpha_+},C\inv\zeta_{\alpha_-})$. By the continuous mapping theorem, we have proved~\eqref{eq:kappanqn}.\qqed
\end{proof}
Corollary~\ref{coro:power} provides theoretical support for the gamma test. Indeed, it shows that as long as $\kappa_n^\pm\sim(1/\gamma_\pm)^\leftarrow(n) = o(n)$, then $nq_n$ explodes quickly as $n\to\infty$. This means in this case, the gamma test rejects the null hypothesis with probability going to one as the sample size increases to infinity. Furthermore, the following example shows that when $\kappa_n^\pm\sim C_\pm n$ for some constants $C_\pm>0$, the gamma test still performs reasonably well. In effect, this is indeed the `worst' case that the gamma test could encounter, provided the ratio tail index exists.
\begin{Example}\label{example:power}
Recall the model~\eqref{eq:rho} considered in Example~\ref{example:patho}:
\[
(X,Y) \eqd ((1-\rho)Z_2\vee \rho Z_3,\rho Z_1\vee (1-\rho)Z_2)\,.
\]
In this case, $\gamma_\pm(t) = \rho/t$ for $t>1$, and the tail dependence index $\lambda = 1-\rho$. Corollary~\ref{coro:power} yields
\[
nq_{n}\weakto \frac1{\rho}\bpp{\frac1{\zeta_1\topp1}+\frac1{\zeta_1\topp2}}\eqd \Gamma(2,\rho\inv)\,.
\]
Now, observe that in the strong tail dependence case, i.e., $\rho$ is close to 0, the power of the test is high. Consider a test of level $0.05$. Let $q_\rho(\beta), \beta\in[0,1]$ denote the lower $\beta$ quantile of the distribution of $\Gamma(2,\rho\inv)$. For this model, the power converges to
\equh\label{eq:limitPower}
1 - q\inv_\rho(q_1(0.95))\,.
\eque
Figure~\ref{fig:power} illustrates the power of the test as a function of $\rho\in[0,1]$. We see that the test performs reasonably well as long as the tail dependence index $\lambda = 1-\rho$ is not too small. In addition, we also observe that the test statistic $nq_{n}$ converges quickly. For $n = 20$, the power of the test is already close to the limit one in~\eqref{eq:limitPower}.
\begin{figure}[ht!]
\begin{center}
\includegraphics[width = 0.8\textwidth]{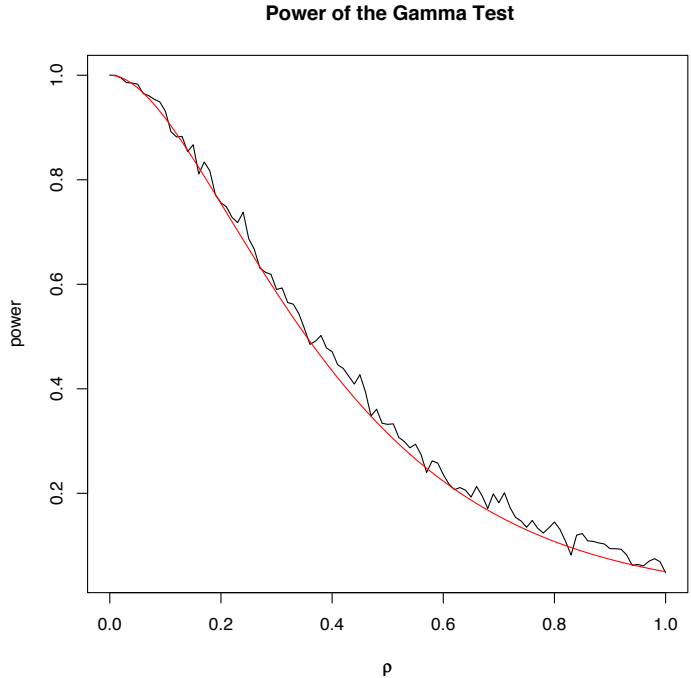}
\end{center}
\caption{\label{fig:power}Power of the gamma test (solid line) in the `worst scenario' (Example~\ref{example:power}) for selected $\rho$'s with $n = 20$, based on 1000 simulations. The dashed line corresponds to the limit power calculated in~\eqref{eq:limitPower}.}
\end{figure}
\begin{Rem}
In this experiment, we used a {\em modified gamma test} instead of the original one. Namely, instead of $q_n = q_{0,n}$ defined as in~\eqref{eq:qun}, we used a slightly different statistic $\what q_{n} \defe R_n^+R_n^-/(R_n^+ + R_n^-)$ with $R_n^\pm$ as in~\eqref{eq:Rn}. We do not use the original test statistic $q_{n}$ because, in this example when $\rho$ is small and the sample size $n$ is small, with very high probability the $q_{0,n}$ would equal $0/0$, which is not well defined. 

One can show that $nq_n$ and $n\what q_n$ have the same asymptotic distribution. Indeed, if we replace $q_n$ by $\what q_n$ in Corollary~\ref{coro:power}, all the statements remain valid. The proof will be the same, except that we have $\kappa_n^+\what q_n = ({\what R_n^++\what R_n^-})/(\what R_n^+\what R_n^-)$
instead of~\eqref{eq:kappan+qn}.
\end{Rem}
\end{Example}
\begin{Rem}\label{rem:power}
The model~\eqref{eq:rho} represents the `worst' case that the gamma test can encounter.
This is the case when the power of the gamma test is low, but the tail dependence is the strongest possible. 

To design such a scenario, we need $nq_{n}$ to converge and $(X,Y)$ to have the largest tail dependence index possible. To have $nq_n$ to converge, by Corollary~\ref{coro:power} we must have $\kappa_n^\pm\sim C_\pm n$, which is equivalent to the fact that $H$ has point masses at $\{0\}$ and $\{1\}$. Now, the tail dependence index is maximized by concentrating all the measure of $H$ on $(0,1)$ at $\{1/2\}$.

In this example, the gamma test performs poorly when $\rho$ is close to 1. However, one should not expect any independent test to perform well, as $(X,Y)$ as in~\eqref{eq:rho} can be seen as a pair of independent random variables $(\rho Z_1,\rho Z_3)$, slightly perturbed by $(1-\rho)Z_2$ (via the `max'($\vee$) operation).

\end{Rem}
In \cite{zhang08quotient}, other versions of $q_n$ were also proposed, aiming at dealing with arbitrary bivariate distributions. In principle, the data needs to be first transformed to have 1-Fr\'echet marginals. It is an intriguing problem to characterize how the dependence structure changes during such a transformation. 
The investigation along this line is out of the scope of this paper. 
\begin{Rem}\label{rem:transformation}
Observe that for continuous random variables $X$ and $Y$ with c.d.f.~$F_X$ and $F_Y$, the joint distribution of $(-1/\log F_X(X), -1/\log F_Y(Y))$ will have marginal standard 1-Fr\'echet distributions. However, most of the time this transformation does not lead to a bivariate 1-Fr\'echet distribution.
For studies on these distributions, see e.g.~\cite{ledford96statistics,ledford97modelling}. 
\end{Rem}

\section{Proof of Proposition~\ref{prop:1}}\label{sec:proofs}
We borrow a result from~\cite{weintraub91sample}. Weintraub's work is based on the min-stable distribution (processes), which can be equivalently transformed into our setting. Namely, if $(V,W)$ has a min-stable distribution with spectral functions $f$ and $g\in L^1(S,\mu)$ according to~\cite{weintraub91sample} (see also~\cite{dehaan86stationary}), then $(X,Y) \defe (1/V,1/W)$ is jointly 1-Fr\'echet with representation in~\eqref{eq:XY}.

Now, Lemma 3.4 in \cite{weintraub91sample} becomes, for $(X,Y)$ as in~\eqref{eq:XY},
\eqnhn
& & \proba(X\leq x\mid Y = y) \nonumber\\
\eqnhspace = \exp\bbb{-\int_S\bpp{\frac{f(s)}x-\frac{g(s)}y}\ind_{\ccbb{\frac{f(s)}{g(s)}> \frac xy}}\mu(\d s)}\int_Sg(s)\ind_{\ccbb{\frac{f(s)}{g(s)}\leq \frac xy}}\mu(\d s)\nonumber\\
\eqnhspace = \nn g_{D_t}\exp\bpp{-x\inv\nn f_{E_{x/y}}+y\inv\nn g_{E_{x/y}}}\,.\label{eq:weintraub}
\eqnen
Then, letting $\proba_Y$ denote the distribution of $Y$,
\eqnh
\proba(X/Y\leq t, Y>u) & = & \int_u^\infty\proba(X\leq tr\mid Y = r)\proba_Y(\d r)\\
& = & \int_u^\infty \proba(X\leq tr\mid Y = r)r^{-2}\exp(-r\inv)\d r\\
& = & \nn g_{D_t}\int_u^\infty r^{-2}\exp\bpp{-\frac{1+t\inv\nn f_{E_t} - \nn g_{E_t}}r}\d r\,.
\eqne
Observe that $1- \nn g_{E_t} = \nn g_{D_t}$. We have thus obtained~\eqref{eq:prop1}. The proof is complete.
\begin{Rem}
Careful readers may find the difference between our definition of $D_t$ in~\eqref{eq:Dt} and the one in~\cite{weintraub91sample}, Lemma 3.4 (where different symbols are used). Therein, $D_t = \{s\in S:f(s)/g(s) < t\}$, with the `$\leq$' in~\eqref{eq:Dt} replaced by `$<$'. After reading the two paragraphs of the proof of Lemma 3.4, one should see that the correct definition of $D_t$ is as in~\eqref{eq:Dt} with `$\leq$'. See also the proof of Lemma 3.5 in~\cite{weintraub91sample}, where Lemma 3.4 was applied with `$\leq$'.

An alternative way to see quickly it is not correct to choose the definition with `$<$' is given next. Note that, using `$<$' instead of `$\leq$' would only lead to a different formula when $\nn g_{D_t}$ (or $\nn f_{E_t}$, resp.) is discontinuous at some $t_0>0$. 
Consider
\[
F(t) \defe \proba(X/Y\leq t) = \proba(X/Y\leq t, Y>0) = \bpp{1+\frac {\nn f_{E_t}}{t\nn g_{D_t}}}\inv\,,
\]
which has jumps at $t_0$ such that $\mu\{s:f(s)/g(s) = t_0\}>0$. In this case, the definition with `$<$' would cause $F(t)$ to be right-discontinuous at $t_0$. Indeed, both $\nn f_{E_t}=\int_{E_t} f\d\mu$ and $\nn g_{D_t}=\int_{D_t} g\d\mu$ would be right-discontinuous at $t_0$. But $F(t)$, as a cumulative distribution function, should be right-continuous, which is a contradiction.

\end{Rem}
\section{Discussion}\label{sec:discussion}
Recently, several extensions of the notion of regular variation have been introduced. The main motivation behind them is to study in more details the asymptotic behaviors that are not captured by the standard multivariate regular variation. 
In particular,~\cite{resnick02hidden} introduced the notion of hidden regular variation, characterizing the dependence structure when components of the random vectors are asymptotically independent but not independent;~\cite{maulik02asymptotic} investigated the regular variation of products of random variables, and applied the result to model the network traffic.
We discuss our results from these perspectives. 
\medskip\\
{\it Hidden regular variation.} 
Recall the definition of regular variation in the language of vague convergence~\eqref{eq:VC}. When the exponent measure $\nu$ concentrates on the two axes, the two random variables $V$ and $W$ are asymptotically independent. In this case, 
$\nu$ does not provide useful information to characterize different dependence structures for asymptotically independent random variables. More sophisticated models are needed.

This problem was first investigated by \cite{ledford96statistics,ledford97modelling}, and their models later on were generalized under the framework of hidden regular variation by \cite{resnick02hidden,resnick07heavy,resnick08multivariate}. Hidden regular variation is present when the vague convergence~\eqref{eq:VC} holds with $\nu$ concentrated on the axes, i.e., $V$ and $W$ are asymptotically independent, and in addition there exists $\kappa_t^o = o(\kappa_t)$, such that 
\[
t\proba\bbb{\bpp{\frac V{\kappa_t^o},\frac W{\kappa_t^o}}\in\cdot} \Vagueto\nu^o(\cdot)
\]
in $M_+(\esp_0)$ with $\esp_0 = (0,\infty]\times (0,\infty]$. Intuitively, the notion of hidden regular variation involves normalizing the vector $(V,W)$ by sequences of constants of smaller order $(\kappa_t^o = o(\kappa_t)$) than required in~\eqref{eq:VC}. Thus, ignoring the two axes, certain dependence structure might appear in the limit. This dependence structure is not captured, therefore `hidden', when the `standard' rate $\kappa_t$ is taken.

In our case, when $(X,Y)$ is a bivariate 1-Fr\'echet random vector, we will see that $(V,W) = (X/Y,Y)$ are asymptotically independent, and there is no hidden regular variation. We first look at the convergence in form of 
\equh\label{eq:X/YY}
\nu_t(\cdot)\defe t\proba\bbb{\bpp{\frac{X/Y}{\kappa_t},\frac Yt}\in\cdot}\Vagueto\nu(\cdot)
\eque
in $M_+(\esp)$ with $\kappa_t\sim(1/\gamma)^\leftarrow(n)$. 
Here, we are in a slightly different situation, as the normalizing sequences $\kappa_t$ and $t$ are of different rates most of the time. In this case, we say $X/Y$ and $Y$ are asymptotically independent {\it with different rates}. The rates are the same, i.e., $\kappa_t\sim ct$ for some constant $c\in(0,\infty)$, if and only if $\int_Sf\ind_{\{g=0\}}\d\mu > 0$, by Corollary~\ref{coro:2}, part (i). Otherwise, $\kappa_t = o(t)$.

We first show below that $X/Y$ and $Y$ are asymptotically independent (with different rates if $(1/\gamma)^\leftarrow(t) = o(t)$). Indeed, for $r>0, y>0$, Proposition~\ref{prop:1} implies, writing $\wt t = \kappa_tr$ for simplicity,
\eqnhn
\nu_t([0,r]\times(y,\infty])& = & t\proba\bpp{\frac{X/Y}{\kappa_t}\leq r,\frac Yt>y} \nonumber\\
& = & t\frac{\nn g_{D_{\wt t}}}{\nn g_{D_{\wt t}} + {\wt t}\inv\nn f_{E_{\wt t}}}\bbb{1-\exp\bpp{-\frac{\nn g_{D_{\wt t}} + \wt t\inv\nn f_{E_{\wt t}}}{ty}}}\nonumber\\
& = & \frac{\nn g_{D_{\wt t}}}y + o(1) = \frac 1y + o(1)\,,\mmas t\to\infty\,.\label{eq:t}
\eqnen
It follows that for all rectangles $E = [r_1,y_1)\times[r_2,y_2)$ in $\mathbb E_0$, $\limt\nu_t(E) = 0$. 
Recall also that by~\eqref{eq:convergence} (with $u=0$), $t\proba(X/Y/\kappa_t>r)\sim r^{-\alpha}$. This together with~\eqref{eq:t} yields the following result.
\begin{Prop}
Let $(X,Y)$ be as in~\eqref{eq:XY} and suppose~\eqref{eq:RV} holds. Then, for $\kappa_t\sim(1/\gamma)^\leftarrow(t)$, the vague convergence~\eqref{eq:X/YY} holds in $M_+(\esp)$ with $\nu$ concentrated on the two axes. In other words,
\[
\bpp{\frac1{\kappa_n}\bveein\frac{X_i}{Y_i}, \frac1n \bveein Y_i}\weakto (\zeta_\alpha\topp1,\zeta_1\topp2)\,,
\]
where $\zeta_\alpha\topp1$ and $\zeta_1\topp2$ are independent standard Fr\'echet random variables with indices $\alpha$ and 1, respectively.
\end{Prop}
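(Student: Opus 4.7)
The proposition is equivalent to a statement about asymptotic independence of $X/Y$ and $Y$ (with possibly different normalizing sequences), and the plan is to combine the already-established marginal convergences with a single joint-tail estimate. Specifically, I will deduce both the vague-convergence form \eqref{eq:X/YY} and the joint weak convergence from: (i)~Corollary~\ref{coro:2}(ii), which gives $n\proba(X/Y>\kappa_n r)\to r^{-\alpha}$; (ii)~the standard 1-Fr\'echet extremes for $Y$, giving $n\proba(Y>ny)\to 1/y$; and (iii)~the key bound $n\proba(X/Y>\kappa_n r,\,Y>ny)\to 0$.

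The bound in (iii) is the only genuinely new calculation. By Proposition~\ref{prop:1},
\[
\proba(X/Y>\kappa_n r,\,Y>ny)=[1-e^{-1/(ny)}]-\bpp{1+\frac{\gamma(\kappa_n r)}{\nn g_{D_{\kappa_n r}}}}\inv\bb{1-\exp\bpp{-\frac{\nn g_{D_{\kappa_n r}}+\gamma(\kappa_n r)}{ny}}}.
\]
Setting $a_n=\nn g_{D_{\kappa_n r}}\to 1$ and $b_n=\gamma(\kappa_n r)\to 0$ and Taylor-expanding both exponentials to second order in $1/(ny)$, the leading $1/(ny)$ contributions will cancel exactly, leaving a residue of the form $(1-a_n)/y+O(1/n)=\nn g_{E_{\kappa_n r}}/y+O(1/n)$. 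Since $\nn g_{D_t}+\nn g_{E_t}=\nn g_S=1$ and $\nn g_{E_t}\to 0$ by dominated convergence, this yields the desired $o(1/n)$ bound.

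Granting this, the vague convergence can be read off as follows. The pre-limit measure $\nu_t$ assigns mass $r^{-\alpha}$ in the limit to $(r,\infty]\times[0,\infty]$ by~(i), mass $1/y$ to $[0,\infty]\times(y,\infty]$ by~(ii), and mass $0$ to the interior rectangles $(r,\infty]\times(y,\infty]$ by~(iii). These three facts force the limit $\nu$ to live on the two axes with marginals $r^{-\alpha}$ and $1/y$, and yield \eqref{eq:X/YY} since rectangles bounded away from $(0,0)$ form a convergence-determining class in $M_+(\mathbb E)$.

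The joint weak convergence then follows from i.i.d.\ and inclusion--exclusion:
\[
\proba\bpp{\kappa_n\inv\bveein(X_i/Y_i)\leq r,\,n\inv\bveein Y_i\leq y}=[1-p_1(n)-p_2(n)+p_{12}(n)]^n,
\]
with $np_1(n)\to r^{-\alpha}$, $np_2(n)\to 1/y$, and $np_{12}(n)\to 0$, so $(1-x_n/n)^n\to e^{-r^{-\alpha}}e^{-1/y}$, matching the c.d.f.\ of independent standard $\alpha$- and $1$-Fr\'echet random variables. The main obstacle throughout is the second-order Taylor cancellation in~(iii); without it one only obtains $np_{12}(n)=O(1)$, not the $o(1)$ needed to decouple the coordinates in the limit.
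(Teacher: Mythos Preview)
Your proposal is correct and follows essentially the same route as the paper: both arguments combine the two marginal tail limits with Proposition~\ref{prop:1} to show the joint tail vanishes, and then read off the vague convergence and the joint weak limit.

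There is one minor efficiency difference worth noting. You compute the ``interior'' tail
\[
n\,\proba\!\left(\frac{X}{Y}>\kappa_n r,\ Y>ny\right)
= n\,\proba(Y>ny)-n\,\proba\!\left(\frac{X}{Y}\le \kappa_n r,\ Y>ny\right),
\]
which forces you to subtract two quantities each of order $1/(ny)$ and then track the residue $(1-\nn g_{D_{\kappa_n r}})/y$ plus a second-order remainder; you correctly identify this as the delicate step. The paper instead applies Proposition~\ref{prop:1} \emph{directly} to the second term, obtaining
\[
t\,\proba\!\left(\frac{X}{Y}\le \kappa_t r,\ Y>ty\right)
= \frac{\nn g_{D_{\wt t}}}{y}+o(1)\to\frac1y,
\]
with only a first-order expansion and no cancellation. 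From this, $\nu_t([0,r]\times(y,\infty])\to 1/y$ independently of $r$, so differences over interior rectangles vanish automatically. Thus the ``main obstacle'' you flag is an artifact of computing the complementary event; the paper's choice of event avoids it entirely. Either way the conclusion is the same.
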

Now, we examine the hidden regular variation of this model. Suppose that there exist sequences $\kappa_t^o = o((1/\gamma)^\leftarrow(t))$ and $\iota_t^o = o(t)$, such that
\equh\label{eq:nuto}
\nu_t^o(\cdot)\defe t\proba\bbb{\bpp{\frac{X/Y}{\kappa_t^o},\frac Y{\iota^o_t}}\in\cdot} \vagueto\nu^o(\cdot)
\eque
in $M_+(\mathbb E_0)$ for some non-zero $\nu^o\in M_+(\mathbb E_0)$, i.e., $(X/Y, Y)$ is hidden-regularly varying. Then, by a similar calculation as in~\eqref{eq:t}, we have
\[
\nu_t^o((0,r]\times(y,\infty]) = \frac t{\iota_t^oy}\nn g_{D_{\kappa_t^or}}(1+o(1)).
\]
We see that $\nu_t^o$ cannot converge in $\mathbb E_0$. Therefore, we have proved the following.
\begin{Prop}
There is no hidden regular variation for $(X/Y, Y)$.
\end{Prop}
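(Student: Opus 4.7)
My plan is by contradiction. Suppose that for some sequences $\kappa_t^o=o((1/\gamma)^\leftarrow(t))$ and $\iota_t^o=o(t)$ the vague convergence $\nu_t^o\vagueto\nu^o$ holds in $M_+(\esp_0)$ for a non-zero Radon $\nu^o$. The first step is to mirror the derivation of~\eqref{eq:t}: applying Proposition~\ref{prop:1} with $s=r\kappa_t^o$ and $u=y\iota_t^o$, using $1-\exp(-c/u)\sim c/u$ as $u=y\iota_t^o\to\infty$, and simplifying via the identity $(1+\gamma(s)/\nn g_{D_s})^{-1}(\nn g_{D_s}+\gamma(s))=\nn g_{D_s}$ together with $\gamma(s)=\nn f_{E_s}/s$, I obtain the displayed formula
$$\nu_t^o((0,r]\times(y,\infty]) \;=\; \frac{t}{\iota_t^o\,y}\,\nn g_{D_{\kappa_t^o r}}\,(1+o(1)).$$

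Second, I would observe that $\kappa_t^o\to\infty$ is forced if the HRV framework is to be non-degenerate, so by dominated convergence (with $g\in L^1(S,\mu)$ and $\ind_{D_s}\to 1$ pointwise as $s\to\infty$) one has $\nn g_{D_{\kappa_t^o r}}\to\nn g_S=1$. Combined with $\iota_t^o=o(t)$, the right-hand side above is asymptotic to $t/(\iota_t^o y)$, which diverges to $+\infty$ for every fixed $r,y>0$ and, crucially, whose leading term is \emph{independent of $r$}. Exploiting this $r$-independence, for any $0<r_1<r_2$ the relatively compact set $(r_1,r_2]\times(y,\infty]\subset\esp_0$ satisfies
$$\nu_t^o((r_1,r_2]\times(y,\infty]) \;=\; \frac{t}{\iota_t^o y}\bigl(\nn g_{D_{r_2\kappa_t^o}}-\nn g_{D_{r_1\kappa_t^o}}\bigr)(1+o(1)),$$
i.e., the divergent contribution cancels at leading order. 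Subtracting, all of the divergent mass of $\nu_t^o((0,r]\times(y,\infty])$ must reside in the slab $(0,r_1]\times(y,\infty]$, and this holds for every $r_1>0$.

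Letting $r_1\downarrow 0$, the mass of $\nu_t^o$ concentrates on the excluded axis $\{v=0\}$, which lies outside $\esp_0$. A sequence of measures on $\esp_0$ whose mass escapes to a point not in $\esp_0$ cannot vaguely converge to a non-zero Radon measure on $\esp_0$, giving the desired contradiction. The main obstacle will be making this ``escape to the axis'' rigorous: the set $(0,r]\times(y,\infty]$ is itself not relatively compact in $\esp_0$ (its closure in $[0,\infty]^2$ meets $\{v=0\}$), so vague convergence does not directly give $\nu_t^o((0,r]\times(y,\infty])\to\nu^o((0,r]\times(y,\infty])$ and one cannot pass to the limit naively in the displayed formula. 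I would handle this by approximating from within by the relatively compact sets $(a,r]\times(y,M]$ with $a\downarrow 0$ and $M\uparrow\infty$, applying vague convergence on each (where $\nu^o$ is finite by the Radon property), and combining the resulting finite limits with the $r$-independence of the leading asymptotic to force $\nu^o(K)=0$ on every compact $K\subset\esp_0$, contradicting $\nu^o\neq 0$.
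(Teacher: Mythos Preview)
You are right to flag that $(0,r]\times(y,\infty]$ is not relatively compact in $\mathbb E_0=(0,\infty]^2$, so the divergence of $\nu_t^o$ there --- which is all the paper's argument offers --- does not by itself contradict vague convergence in $M_+(\mathbb E_0)$. But your repair breaks down. First, the subtraction step is algebraically wrong: from $\nu_t^o((0,r_i]\times(y,\infty])=\frac{t}{\iota_t^o y}\nn g_{D_{r_i\kappa_t^o}}(1+o(1))$ for $i=1,2$ one cannot conclude that the difference equals $\frac{t}{\iota_t^o y}\bigl(\nn g_{D_{r_2\kappa_t^o}}-\nn g_{D_{r_1\kappa_t^o}}\bigr)(1+o(1))$; each $o(1)$ error has absolute size $o(t/\iota_t^o)$, which diverges, so after subtraction you are left with an uncontrolled $o(t/\iota_t^o)$ remainder and the ``cancellation at leading order'' gives no information about $\nu_t^o((r_1,r_2]\times(y,\infty])$. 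Second, and decisively, the phenomenon you describe --- mass of $\nu_t^o$ escaping to the excluded axis $\{v=0\}$ --- is precisely what vague convergence on the sub-cone $\mathbb E_0$ \emph{permits}; it is not a contradiction, so the compact-approximation plan at the end cannot produce one either.

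In fact the statement as formulated appears to be false. For the logistic model with parameter $\alpha=2$ (Example~\ref{example:logistic}) one has $\nn g_{E_s}=1-s(1+s^2)^{-1/2}\sim\tfrac12 s^{-2}$, and a direct expansion from Proposition~\ref{prop:1} yields
\[
t\,\proba\bigl(X/Y>t^{1/3}r,\ Y>t^{1/3}y\bigr)\ \longrightarrow\ \frac{1}{2r^2y}\qquad(r,y>0).
\]
Thus with $\kappa_t^o=\iota_t^o=t^{1/3}$ (which satisfy $\kappa_t^o=o(t^{1/2})=o((1/\gamma)^\leftarrow(t))$ and $\iota_t^o=o(t)$) the measures $\nu_t^o$ converge vaguely on $\mathbb E_0$ to the non-zero Radon limit $\nu^o((r,\infty]\times(y,\infty])=\tfrac{1}{2r^2y}$, so $(X/Y,Y)$ \emph{does} exhibit hidden regular variation here. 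Neither the paper's one-line argument nor your elaboration can be completed, because the conclusion they aim for fails in this example; the divergence on sets touching $\{v=0\}$ is entirely compatible with a non-trivial vague limit on $\mathbb E_0$.
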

\noindent{\it Regular variation of products.}
\cite{maulik02asymptotic} proposed different generalizations of~\eqref{eq:VC}, for the purpose of characterizing the regular variation of the product of random variables. Their main results focus on two different situations.

First, if $V\in RV_{-\alpha}$ and 
\equh\label{eq:AI}
t\proba\bbb{\bpp{\frac V{\kappa_t},W}\in\cdot}\Vagueto(\nu_\alpha\times G)(\cdot)
\eque
in $M_+(\mathbb D), \mathbb D = (0,\infty]\times[0,\infty]$, where $\nu_\alpha((x,\infty]) = x^{-\alpha}, x>0$ and $G$ is a probability measure with $G(0,\infty) = 1$. Then,
\[
t\proba\bbb{\frac{(V,VW)}{\kappa_t}\in\cdot}\Vagueto\nu(\cdot)
\]
in $M_+(\mathbb D)$ with $\nu\in M_+(\mathbb D)$, determined by $\nu_\alpha$ and $G$ (\cite{maulik02asymptotic}, Theorem 2.1). This case can be seen as a generalization of Breiman's theorem, since $VW$ has an equivalent tail as $V$. Indeed,~\eqref{eq:AI} is referred to as the (new) definition of asymptotic independence of $V$ and $W$ therein.

Second, suppose $V\in RV_{-\alpha_V}, W\in RV_{-\alpha_W}$ for some $\alpha_V,\alpha_W>0$ and $V$ and $W$ are asymptotically {\it dependent} with different rates, or equivalently,
\[
t\proba\bbb{\bpp{\frac V{\kappa_t},\frac W{\iota_t}}\in\cdot} \Vagueto \nu(\cdot)
\]
in $M_+(\esp_0)$ with $\nu((0,\infty]^2) > 0$. Then, $VW\in RV_{-({\alpha_V\alpha_W})/({\alpha_V+\alpha_W})}$.
In this situation, the multiplication with a random variable changes the tail behavior. 

Our asymptotic result on the bivariate Fr\'echet random variables (Theorem~\ref{thm:RV}, with $V = X$ and $W = 1/Y$), does not fall into any of these situations. Indeed, if~\eqref{eq:RV} holds with $\alpha>1$, then $X/Y$ has a lighter tail than $X$, which differs from the first situation; on the other hand, $1/Y$ always has exponential distribution, which is not regularly varying, whence we are not in the second situation either. Therefore, the following question arises.
\begin{Ques}\label{ques:1}
Let $V$ and $W$ be two nonnegative random variables. Suppose $V\in RV_{-\alpha}$ (but $W$ may not be regularly varying). Provide a sufficient condition on $V$ and $W$ such that $VW$ is regularly varying with tail index $\beta>\alpha>0$.
\end{Ques}
\ifthenelse{\boolean{format}}{
\begin{acknowledgements}
The author was grateful to Stilian Stoev for his careful reading of an early version of the paper, as well as many inspiring and helpful comments and suggestions. The author would also like to thank the Editor J\"urg H\"usler and two anonymous referees for their helpful suggestions and comments, which significantly improved the paper.
\end{acknowledgements}}
{\vspace{0.2in}
\noindent{\bf Acknowledgements}
The author was grateful to Stilian Stoev for his careful reading of an early version of the paper, as well as many inspiring and helpful comments and suggestions. The author would also like to thank the Editor J\"urg H\"usler and two anonymous referees for their helpful suggestions and comments, which significantly improved the paper. The author was partially supported by the National Science Foundation grant DMS--0806094 at the University of Michigan. 
}
\def\doi#1{DOI: #1}

\def\cprime{$'$}

\end{document}